\title{Partial list colouring of certain graphs}
\author[1]{Jeannette Janssen}
\author[2]{Rogers~Mathew\footnote{Supported by an AARMS Postdoctoral Fellowship}}
\author[2]{Deepak Rajendraprasad}
\affil[1]
{
	Department of Mathematics and Statistics, \authorcr 
	Dalhousie University, 
	Halifax, Canada - B3H 3J5. \authorcr
	janssen@mathstat.dal.ca
}
\affil[2]
{
	Department of Computer Science, \authorcr 
        Caesarea Rothschild Institute, \authorcr 
        University of Haifa, 31905 Haifa, Israel \authorcr
        \{rogersmathew,deepakmail\}@gmail.com
}
\theoremstyle{definition}
\theoremstyle{plain}
\newtheorem{theorem}{Theorem}
\newtheorem{lemma}[theorem]{Lemma}
\newtheorem{corollary}[theorem]{Corollary}
\newtheorem{proposition}[theorem]{Proposition}
\theoremstyle{remark}
\newtheoremstyle{plainitshape}
  {}
  {}
  {\itshape}
  {}
  {\itshape}
  {.}
  {0.5em}
  {}
\theoremstyle{plainitshape}
\newtheoremstyle{cases}
  {}
  {}
  {}
  {}
  {}
  {\newline}
  {0.5em}
  {{\itshape \thmname{#1}} \thmnumber{#2} ({\itshape\thmnote{#3}}).\medskip}
\theoremstyle{cases}
\newtheoremstyle{constructions}
  {}
  {}
  {}
  {}
  {}
  {}
  {0.5em}
  {{\itshape \thmname{#1}} \thmnumber{#2}\medskip}
\theoremstyle{constructions}
\begin{document}
\maketitle
\begin{abstract}
Let $G$ be a graph on $n$ vertices and let $\mathcal{L}_k$ be an arbitrary function that assigns each vertex in $G$ a list of $k$ colours. Then $G$ is \emph{$\mathcal{L}_k$-list colourable} if there exists a proper colouring of the vertices of $G$ such that every vertex is coloured with a colour from its own list. We say $G$ is \emph{$k$-choosable} if for every such function $\mathcal{L}_k$, $G$ is $\mathcal{L}_k$-list colourable. The minimum $k$ such that $G$ is $k$-choosable is called the \emph{list chromatic number} of $G$ and is denoted by $\chi_L(G)$. Let $\chi_L(G) = s$ and let $t$ be a positive integer less than $s$. 
The \emph{partial list colouring conjecture} due to Albertson et al. \cite{albertson2000partial} states that for every $\mathcal{L}_t$ that maps the vertices of $G$ to $t$-sized lists, there always exists an induced subgraph of $G$ of size at least $\frac{tn}{s}$ that is $\mathcal{L}_t$-list colourable. 
In this paper we show that the partial list colouring conjecture holds true for certain  classes of graphs like claw-free graphs, graphs with large chromatic number, chordless graphs, and series-parallel graphs. 

In the second part of the paper, we put forth a question which is a variant of the partial list colouring conjecture: does $G$ always contain an induced subgraph of size at least $\frac{tn}{s}$ that is $t$-choosable? 
We show that the answer to this question is not always `yes' by explicitly constructing an infinite family of $3$-choosable graphs where a largest induced $2$-choosable subgraph of each graph in the family is of size at most $\frac{5n}{8}$. 
\end{abstract}
\section{Introduction}

Consider a simple, undirected, and finite graph $G$. Let $\mathcal{L} = \{l(v) : v \in V(G)\}$ denote an assignment of a list of admissible colours for each vertex in $G$. Then $G$ is \emph{$\mathcal{L}$- list colourable} if there exists a proper colouring (i.e., no two adjacent vertices get the same colour) of the vertices of $G$ such that each vertex $v$ is assigned a colour from $l(v)$. If $\forall v \in V(G), |l(v)| = k$ then $\mathcal{L}$ is called an \emph{$k$-assignment}.  We say $G$ is \emph{$k$-choosable} if $G$ is $\mathcal{L}$-list colourable for all $k$-assignments $\mathcal{L}$.  The \emph{list chromatic number} of $G$, denoted by $\chi_L(G)$, is the minimum positive integer $k$ such that $G$ is $k$-choosable.  

Consider a graph whose chromatic number is $s$. Take an $s$-colouring. We know that, for every positive integer $t$ less than $s$, one can properly colour at least $\frac{tn}{s}$ of its vertices using only $t$ colours  (by taking the vertices of the largest $t$ colour classes of  the $s$-colouring). Albertson, Grossman, and Haas in \cite{albertson2000partial} asked a similar and a very natural question on list colouring. Consider a  graph $G$ on $n$ vertices whose list chromatic number is $s$. What if one assigns lists of size $t$ to each vertex where $t$ is some positive integer less than $s$? We know that with such a list assignment it may not be possible to list colour all the vertices of $G$. But the question is not to colour all the vertices but to colour as many vertices as one can. They conjectured that, given a $t$-assignment, one can always find an induced subgraph of size at least $\frac{tn}{s}$ that can be properly list coloured. We give a more formal description of the conjecture below: 
\\
\textit{Partial list colouring conjecture} (Conjecture $1$ in \cite{albertson2000partial}): Consider an arbitrary graph $G$ on $n$ vertices whose list chromatic number is $s$. Let $t$ be any positive integer less than $s$. Let $\mathcal{L}_t = \{l_t(v) : v \in V(G)\}$ be any $t$-assignment for $G$. Let $\lambda_{\mathcal{L}_t}(G)$ denote the size of a largest induced subgraph of $G$ that is $\mathcal{L}_t$-list colourable. Let $\lambda_t(G) = \min\{\lambda_{\mathcal{L}_t}(G): \mathcal{L}_t \mbox{ is a }t\mbox{-assignment for }G\}$. The partial list colouring conjecture states that $\lambda_t(G) \geq \frac{tn}{s}$. 

The authors in \cite{albertson2000partial} showed that $\lambda_t(G)$ is always at least $\left(1-(\frac{\chi(G)-1}{\chi(G)})^t\right)n$, where $\chi(G)$ denotes the chromatic number of $G$. For a bipartite graph this proves the conjecture as by induction one can see that $\left(1-(\frac{\chi(G)-1}{\chi(G)})^t\right)n \geq \frac{tn}{\chi(G) + t -1} = \frac{tn}{t+1}$. Chappell in \cite{chappell1999lower} showed that $\lambda_t(G)$ is lower bounded by $\frac{6}{7}\frac{tn}{s}$. Janssen in 
\cite{janssen2001partial} proved that the conjecture holds true for every graph whose list chromatic number is at least its maximum degree. In \cite{haas2003bounds} it was shown that $\lambda_t(G) \geq \frac{n}{\lceil\frac{s}{t}\rceil}$. 
\subsection{Notations and Definitions}
For any $S \subseteq V(G)$, we use $G \setminus S$ to denote the subgraph of a graph $G$ induced on the vertex set $V(G) \setminus S$. For a $v \in V(G)$, we use $G\setminus v$ to denote the graph $G\setminus \{v\}$. Let $N_G(S) := \{v \in V(G)\setminus S~|~v\mbox{ has a neighbour in }  S\}$. For a $v \in V(G)$, we use $N_G(v)$ to denote $N_G(\{v\})$. Let $deg_G(v) := |N_G(v)|$. For any positive integer $n$, we use $[n]$ to denote the set $\{1, \ldots , n\}$. 

A graph $G$ is \emph{$k$-degenerate} if the vertices of $G$ can be arranged on a horizontal line from left to right such that no vertex has more than $k$ neighbours to its right. From such an ordering of the vertices it's easy to see that if $G$ is $k$-degenerate then $\chi(G) \leq k+1$ and $\chi_L(G) \leq k + 1$. 
\subsection{Outline of the paper}
In the subsections of Section \ref{ProofSection} we prove that the partial list colouring conjecture holds true for various graphs classes like claw-free graphs, graphs of large chromatic number, and for some sub-families of $3$-choosable graphs like chordless graphs and series-parallel graphs. In Section \ref{StrongerConjectureSection}, we look at a question that is a variant of the partial list colouring conjecture: does an $s$-choosable graph $G$ on $n$ vertices always contain an induced subgraph of size at least $\frac{tn}{s}$ that is $t$-choosable? We show that the answer to this question is not always `yes' by explicitly constructing an infinite family of $3$-choosable graphs where a largest induced $2$-choosable subgraph of each graph in the family is of size at most $\frac{5n}{8}$.     
\section{Partial list colouring of some graphs}
\label{ProofSection}

\subsection{Claw-free graphs}
Here we prove that the partial list colouring conjecture holds true for all claw-free graphs. The proof technique is similar to the one used  in \cite{janssen2001partial} to prove the conjecture for a graph whose list chromatic number is at least its maximum degree. Given a list assignment for a graph where the size of lists is smaller than required, we append every list with a set  of new colours such that the new list size equals the list chromatic number of the graph. Next we do a proper list colouring of the graph using the new lists such that the number of vertices that are assigned a new colour is minimized. A simple counting argument is then used to prove that the above list colouring colours sufficient number of vertices using a colour from their original lists.  
\begin{theorem}
Let $G$ be a claw-free graph on $n$ vertices whose list chromatic number is $s$. Then for every $t$, where $0 < t <  s$, $\lambda_t(G) \geq \frac{tn}{s}$. 
\end{theorem}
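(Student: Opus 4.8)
The plan is to follow the list-extension strategy of \cite{janssen2001partial} indicated in the discussion preceding the statement. Given a $t$-assignment $\mathcal{L}_t$, I would extend every list $l_t(v)$ to a list $l'(v)$ of size $s$ by appending $s-t$ \emph{new} colours drawn from a pool disjoint from all the original colours; it will be convenient to let these new colours be shared among the vertices rather than private to each one. Since $\chi_L(G)=s$, the size-$s$ assignment $\{l'(v)\}$ admits a proper colouring, and among all such proper colourings I would fix one, say $\phi$, that minimises the number of vertices receiving a new colour. Write $B$ for the set of vertices coloured with a new colour and $A=V(G)\setminus B$ for those coloured from their original lists. Then $G[A]$ is properly $\mathcal{L}_t$-list coloured by $\phi$, so $\lambda_{\mathcal{L}_t}(G)\ge |A|$, and it suffices to prove $|A|\ge \frac{tn}{s}$, i.e.\ $t|B|\le (s-t)|A|$.

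The second step records a forced-neighbour property coming from the minimality of $\phi$. Fix $v\in B$ and an original colour $a\in l_t(v)$. If no neighbour of $v$ were coloured $a$, recolouring $v$ with $a$ would keep the colouring proper while decreasing the number of new-coloured vertices, contradicting minimality; hence some neighbour of $v$ is coloured $a$, and since $a$ is an original colour that neighbour lies in $A$. Distinct colours in $l_t(v)$ yield distinct such neighbours, so every $v\in B$ has at least $t$ neighbours in $A$, carrying $t$ distinct colours. In particular the number of edges between $A$ and $B$ satisfies $e(A,B)\ge t|B|$.

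The claw-free hypothesis enters through a single structural fact: in any proper colouring of a claw-free graph, each colour occurs at most twice in the neighbourhood of any vertex, since three pairwise non-adjacent equally coloured neighbours of a vertex would induce a claw. The remaining step is to use this to bound $e(A,B)$ from above by $(s-t)|A|$, which combined with $e(A,B)\ge t|B|$ gives exactly $t|B|\le (s-t)|A|$ and hence $\lambda_{\mathcal{L}_t}(G)=|A|\ge \frac{tn}{s}$. Concretely, I would charge each $A$--$B$ edge to its endpoint in $A$ and argue that a good vertex $u\in A$, coloured $a$, can be the forced $a$-neighbour of only boundedly many bad vertices, the bound being governed by the $s-t$ available new colours together with the at-most-twice property.

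I expect this last charging step to be the main obstacle. A naive application of the at-most-twice property controls each good vertex's bad degree only up to a factor of two, which yields a constant of the form $\frac{t}{2s-t}$ rather than the sharp $\frac{t}{s}$. Recovering the exact constant appears to require a careful distribution of the charges across the colour classes---using that each new colour is itself shared and hence also capped at two occurrences per neighbourhood, and possibly imposing a secondary optimality condition on $\phi$ to rule out the wasteful doubly-occupied configurations---so that the net charge received by $A$ is at most $s-t$ per vertex.
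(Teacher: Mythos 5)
Your setup coincides with the paper's: extend each list by $s-t$ shared new colours, take a proper colouring $\phi$ of the extended assignment minimising the number of vertices receiving a new colour, and observe (via single-vertex recolourings) that every bad vertex has $t$ distinctly-coloured forced neighbours in $A$. But the second half does not close, and you say so yourself: the charging argument only delivers $t|B|\le 2(s-t)|A|$, i.e.\ $|A|\ge \frac{tn}{2s-t}$, because claw-freeness caps each colour at \emph{two} occurrences in a neighbourhood, not one. The ``careful distribution of charges'' and ``secondary optimality condition'' you invoke to recover the factor of $2$ are not supplied, and they are precisely the crux of the theorem; as it stands the proposal proves a strictly weaker bound (comparable in spirit to Chappell's $\frac{6}{7}\frac{tn}{s}$-type results) rather than the claimed one.

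The missing idea is to abandon local degree-counting in favour of a global set-swap driven by a deficiency version of Hall's theorem. The paper compares, for a single overpopulated new colour $\sigma$ with $|C_\sigma|>\frac{n}{s}$, the sums $\sum_i |C_i|<\frac{tn}{s}$ and $\sum_i |C_\sigma\cap V_i|>\frac{tn}{s}$ over the \emph{original} colours $i$, concluding that some original colour class $C_j$ satisfies $|C_j|<|C_\sigma\cap V_j|$. It then chooses $X\subseteq C_\sigma\cap V_j$ with $|N_G(X)\cap C_j|<|X|$ and $|N_G(X)\cap C_j|$ minimum; minimality forces every vertex of $N_G(X)\cap C_j$ to have at least two neighbours in $X$, and \emph{this} is where claw-freeness enters: such a vertex, having two non-adjacent neighbours in $X\subseteq C_\sigma$, can have no neighbour in $C_\sigma\setminus X$. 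Recolouring $X$ with $j$ and $N_G(X)\cap C_j$ with $\sigma$ is then proper and strictly decreases the number of new-coloured vertices, contradicting the minimality of $\phi$. Note that the swap trades a whole set against its smaller neighbourhood, which is exactly what single-vertex moves (and hence your edge-counting) cannot see; to repair your argument you would need to reproduce this Hall-violator step, at which point you have the paper's proof.
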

\begin{proof}
Let $\mathcal{L}_t = \{l_t(v)~:~v \in V(G)\}$ be an arbitrary $t$-assignment for $G$. Let $\bigcup_{v \in V(G)} l_t(v) = \{1, \ldots , p\}$. Let $\mathcal{L}_{s} = \{l_s(v)~:~l_s(v) = l_t(v) \cup \{\sigma_1, \ldots , \sigma_{s-t}\}, v \in V(G)\}$.  Since $\chi_L(G) = s$, $G$ is $\mathcal{L}_s$-list colourable. 
Let $f:V(G) \rightarrow \{1, \ldots, p, \sigma_1, \ldots , \sigma_{s-t}\}$ be an $\mathcal{L}_s$-list colouring of $G$ such that the number of vertices that receive a colour from the set  $\{\sigma_1, \ldots , \sigma_{s-t}\}$ is minimum. For each $i \in \{1, \ldots p, \sigma_1, \ldots , \sigma_{s-t} \}$, let $C_i := \{v \in V(G)~|~f(v) = i\}$ and let $V_i := \{v \in V(G)~|~i \in l_s(v)\}$.  If $|C_{\sigma_1} \cup \cdots \cup C_{\sigma_{s-t}}| \leq \frac{(s-t)n}{s}$, then the theorem is proved. Suppose $|C_{\sigma_1} \cup \cdots \cup C_{\sigma_{s-t}}| > \frac{(s-t)n}{s}$. Then there exists some $i \in \{1, \ldots , s-t\}$ such that $|C_{\sigma_i}| > \frac{n}{s}$. For ease of notation, from now  we shall use $\sigma$ to denote one such  $\sigma_i$. Then,  
\begin{eqnarray}
\label{ineq1}
\sum_{i=1}^{p}|C_i| & < & \frac{tn}{s}
\end{eqnarray} and 
\begin{eqnarray}
\label{ineq2}
\sum_{i=1}^{p}|C_{\sigma} \cap V_i| > \frac{tn}{s}. 
\end{eqnarray}
From Inequalities \ref{ineq1} and \ref{ineq2}, we can conclude that there exists some $j \in [p]$ such that $|C_j| <  |C_{\sigma} \cap V_j|$. Find an $X \subseteq C_{\sigma} \cap V_j$ such that its neighbourhood in $C_j$ is minimum and is of a size smaller than $|X|$. 
The existence of such an $X$ is guaranteed by the presence of the set $C_{\sigma} \cap V_j$ whose neighbourhood in $C_j$ is smaller than itself in size. We claim that every vertex in $N_G(X) \cap C_j$ has at least two neighbours in $X$. Otherwise, if a vertex in the neighbourhood of $X$ in $C_j$ has only one or no neighbour in $X$ then we can remove such a vertex only to reduce the size of both $N_G(X) \cap C_j$ and $X$ by at most $1$. But this contradicts the minimality of $|N_G(X) \cap C_j|$. Hence every vertex in $N_G(X) \cap C_j$ has at least two neighbours in $X$. Since $G$ is claw-free and  every vertex in $N_G(X) \cap C_j$ is having at least two neighbours in $X$, no vertex in $N_G(X) \cap C_j$ has a neighbour in $C_{\sigma} \setminus X$. By assigning colour $j$ to vertices in $X$ and colour $\sigma$ to vertices in $N_G(X) \cap C_j$, one can find another valid $\mathcal{L}_s$-list colouring of $G$ that has lesser number of vertices taking colours $\sigma_1, \ldots , \sigma_{s-t}$  as compared to $f$. This contradicts the property of $f$.
\end{proof}
Since line graphs are claw-free graphs, we have the following corollary.  
\begin{corollary}
Let $G$ be a line graph (of any multigraph) on $n$ vertices whose list chromatic number is $s$. Then for every positive integer $t$ less than $s$, $\lambda_t(G) \geq \frac{tn}{s}$. 
\end{corollary}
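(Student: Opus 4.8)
The plan is to observe that the corollary is an immediate consequence of the preceding theorem, so the only thing that needs to be established is that every line graph is claw-free; once that is in hand, the bound $\lambda_t(G) \geq \frac{tn}{s}$ follows by applying the theorem directly to $G$ with $s = \chi_L(G)$.

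To see that line graphs are claw-free, I would argue as follows. Let $H$ be a multigraph and let $G = L(H)$ be its line graph, so that the vertices of $G$ are the edges of $H$, and two vertices of $G$ are adjacent precisely when the corresponding edges of $H$ share an endpoint. Suppose, for contradiction, that $G$ contains an induced claw $K_{1,3}$ with centre $e$ and leaves $f_1, f_2, f_3$. Writing $e = uv$ as an edge of $H$, each $f_i$ is adjacent to $e$ in $G$ and hence shares an endpoint with $e$, so each $f_i$ contains $u$ or contains $v$. By the pigeonhole principle, at least two of the three leaves, say $f_1$ and $f_2$, meet $e$ at the same endpoint of $\{u, v\}$. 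But then $f_1$ and $f_2$ share that endpoint in $H$, so they are adjacent in $G$, contradicting the fact that the leaves of an induced claw are pairwise non-adjacent. Hence $G$ has no induced claw.

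Having shown that $G$ is claw-free, I would then simply invoke the theorem, yielding $\lambda_t(G) \geq \frac{tn}{s}$ for every positive integer $t < s$. I do not expect any genuine obstacle here: all the real content of the corollary lies in the claw-free reduction, and the pigeonhole step above is the only nontrivial point. The single detail worth checking is that the argument survives the passage to \emph{multigraphs}, where parallel edges (and possibly loops) may appear. Parallel edges only introduce additional adjacencies among the vertices of $G$ and so can never manufacture a claw where there was none; thus the reduction, and with it the bound, remains valid for line graphs of arbitrary multigraphs.
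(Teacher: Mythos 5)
Your proposal is correct and matches the paper exactly: the paper derives this corollary in one line by noting that line graphs are claw-free and invoking the preceding theorem. Your pigeonhole verification of claw-freeness (including the multigraph check) is a fuller writeup of the same standard fact the paper takes for granted.
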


\subsection{Graphs of large chromatic number}
Ohba in \cite{OhbaConjecture} conjectured that, for a graph $G$ on $n$ vertices, if $\chi(G) \geq \frac{n-1}{2}$ then $\chi(G) = \chi_L(G)$. In 2012, the conjecture was settled in the affirmative by Noel, Reed, and Wu \cite{NoelReedWu}. 
We use this result to prove the following theorem. 
\begin{theorem}
\label{largeChromatThm}
Let $G$ be a graph on $n$ vertices whose list chromatic number is $s$. If $\chi(G) \geq \frac{n-1}{2}$ then for every positive integer $t$ less than $s$, $\lambda_t(G) \geq \frac{tn}{s}$.   
\end{theorem}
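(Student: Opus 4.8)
The plan is to apply the Noel--Reed--Wu theorem to $G$ itself and then transfer the colour-class argument sketched in the introduction to the list setting through a second use of Ohba's criterion. First I would observe that since $\chi(G)\ge\frac{n-1}{2}$, the Noel--Reed--Wu theorem \cite{NoelReedWu} gives $\chi(G)=\chi_L(G)=s$; in particular $n\le 2s+1$, and hence $\frac{tn}{s}\le\frac{t(2s+1)}{s}=2t+\frac{t}{s}<2t+1$ because $t<s$. Thus the target order $\frac{tn}{s}$ lies strictly below $2t+1$, which is exactly the regime in which Ohba's criterion is available on a subgraph.

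The reduction I would use is the following sufficient condition: an induced subgraph $H$ is $t$-choosable, and therefore $\mathcal{L}_t$-list colourable for the given assignment, whenever $\chi(H)\le t$ and $|V(H)|\le 2\chi(H)+1$, since then $\chi(H)\ge\frac{|V(H)|-1}{2}$ and the Noel--Reed--Wu theorem forces $\chi_L(H)=\chi(H)\le t$. So it suffices to produce an induced subgraph $H$ with $\chi(H)\le t$ and $\frac{tn}{s}\le|V(H)|\le 2\chi(H)+1$. To find $H$ I would start from an optimal $s$-colouring $A_1,\dots,A_s$ with $|A_1|\ge\cdots\ge|A_s|$; deleting the $s-t$ smallest classes removes at most $\frac{(s-t)n}{s}$ vertices and leaves an induced subgraph of chromatic number at most $t$ and order at least $\frac{tn}{s}$. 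The remaining task is to trim this subgraph down to order at most $2\chi+1$ without dropping below $\frac{tn}{s}$ vertices, exploiting the density forced by $n\le 2s+1$: every two classes of an optimal colouring are joined by an edge, and a short count bounds the number of classes of size at least two by $n-s\le s+1$.

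The step I expect to be the main obstacle is this last trimming. The union of the $t$ largest classes can carry far more vertices than twice its chromatic number --- for instance when two of the classes induce a large complete bipartite graph $K_{k,k}$, so that $\chi=2$ while the order is $2k$ --- and for such a subgraph Ohba's criterion fails and the subgraph need not be $t$-choosable. The resolution is to choose the classes so as to maximise chromatic number per vertex: balance a large (nearly independent) class against several small or singleton classes adjacent to it, so that the induced subgraph behaves like a complete multipartite graph with $c\le t$ parts and order in the window $[\frac{tn}{s},\,2c+1]$; in the extremal case $n=2s+1$ one is forced to exhibit an induced subgraph of chromatic number exactly $t$ on exactly $2t+1$ vertices. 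Verifying that such an order-controlled subgraph always exists is where the argument really has to engage with the structure of graphs satisfying $n\le 2s+1$; colour-critical bounds of Gallai--Dirac type (a non-complete $c$-critical graph has at least $2c-1$ vertices), together with the scarcity of large classes, should make this possible. As a fallback, when no subgraph meets Ohba's size bound one can instead list colour the chosen subgraph greedily in a degeneracy order, colouring at least $\frac{tn}{s}$ of its vertices directly from the lists $\mathcal{L}_t$.
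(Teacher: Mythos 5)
Your overall strategy---apply Noel--Reed--Wu to $G$ to get $\chi(G)=\chi_L(G)=s$ and $n\le 2s+1$, then apply it a second time to an induced subgraph $H$ satisfying $\chi(H)\le t$ and $|V(H)|\le 2\chi(H)+1$---is exactly the strategy of the paper. But the proposal stops where the actual work begins: you never construct such an $H$, and you yourself flag its construction as ``the main obstacle.'' The obstacle is real: the union of the $t$ largest colour classes can have as many as $n-(s-t)\le s+t+1$ vertices, far above the Ohba threshold $2t+1$, and neither of your suggested repairs closes the gap. The appeal to Gallai--Dirac-type bounds on critical graphs is never turned into an argument, and the ``fallback'' of greedily list colouring a degeneracy order carries no guarantee of reaching $\frac{tn}{s}$ coloured vertices from lists of size $t$ (if such a greedy argument worked, it would prove the conjecture for all $2$-degenerate graphs with no hypothesis on $\chi$). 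So as written this is a plan with a hole precisely where the proof should be.

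The paper fills that hole by a different mechanism: instead of selecting $t$ classes and trimming, it peels off vertices one chromatic level at a time. For $t=s-1$: if $s>\frac{n}{2}$ some class is a singleton and one deletes it; otherwise one deletes either a colour class of size exactly $2$, or a singleton class $C_b$ together with a vertex $u$ from a class of size at least $3$, choosing between $G\setminus(C_b\cup\{u\})$ and $G\setminus\{u,v\}$ according to whether $\chi(G\setminus u)$ equals $s$ or $s-1$. In every case exactly one or two vertices are removed, the chromatic number drops by exactly one, the inequality $\chi(G')\ge\frac{|V(G')|-1}{2}$ is preserved so that Noel--Reed--Wu applies to $G'$, and the size bound $|V(G')|\ge\frac{(s-1)n}{s}$ follows from $n\ge s$ (resp.\ $n\ge 2s$). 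Iterating handles general $t$. This case analysis is precisely the ``trimming'' your proposal would need to supply to become a proof.
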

\begin{proof}
We shall prove the theorem when $t=s-1$. Extending the same argument gives a proof for any $t$. The idea of the proof is to find an induced subgraph $G'$ of $G$ such that $\chi_L(G') = s-1$ and $|V(G')| \geq \frac{(s-1)n}{s}$. 

Since $\chi(G) \geq \frac{n-1}{2}$, from the result in \cite{NoelReedWu}, we know that $\chi_L(G) = \chi(G) = s$. Consider a proper vertex colouring of $G$ using $s$ colours. Let $C_1, \ldots , C_s$ be the $s$ colour classes of this colouring. If $s > \frac{n}{2}$, then there exists some $C_i$ of size $1$. Let $G' = G \setminus C_i$. We then have $\chi(G') = s-1 > \frac{n}{2} - 1 \geq \frac{|V(G')| - 1}{2}$ and therefore, by the result in \cite{NoelReedWu}, $G'$ is $(s-1)$-choosable. Note that $|V(G')| = n-1 \geq \frac{(s-1)n}{s}$, since $n$ is always at least $s$.  

Suppose $\chi(G) = s \leq \frac{n}{2}$. 
If, for some $a \in \{1, \ldots ,s\}$, $|C_a| = 2$ then let $G' = G \setminus C_a$. Otherwise, since $\frac{n}{2} \geq \chi(G) \geq \frac{n-1}{2}$ there exist colour classes $C_b, C_c$ such that $|C_b|=1$ and $|C_c|>2$.  Consider a vertex $u \in C_c$. If $\chi(G \setminus u) = s$, then let $G' = G \setminus C_b \cup \{u\}$. Otherwise, if  $\chi(G \setminus u) = s-1$ then let $v$ be a vertex other than $u$ present in $C_c$ and let $G' = G \setminus \{u,v\}$. Now consider the graph $G'$. No matter how $G'$ was constructed from $G$, we have $\chi(G') = s-1$ and $|V(G')| = n-2$. We thus have $\chi(G') = s-1 \geq \frac{n-1}{2} - 1 = \frac{|V(G')| -1}{2}$. Hence, by the result in \cite{NoelReedWu}, $\chi_L(G') = \chi(G') = s-1$. Note that $|V(G')| = n-2 \geq \frac{(s-1)n}{s}$ since $n$ is at least $2s$.

\end{proof}

\subsection{Chordless graphs}
A graph $G$ is \emph{chordless} if no cycle in $G$ has a chord. Further, $G$ is \emph{minimally $2$-connected} if $G$ is $2$-connected and chordless. Any graph obtained from a given graph by subdividing every edge of the given graph at least once is an example of a chordless graph. If the given graph is $2$-connected, then the resultant graph is minimally $2$-connected. Chordless graphs are $2$-degenerate and therefore $3$-choosable. 

The following lemma about minimally $2$-connected graphs is due to Plummer \cite{plummer1968minimal}.
\begin{lemma}
\label{partitionlemma}
Let $G$ be a $2$-connected graph. Then $G$ is minimally $2$-connected if and only if either 
\begin{itemize}
\item $G$ is a cycle; or 
\item if $S$ denotes the set of nodes of degree $2$ in $G$, then there are at least two components in $G\setminus S$ where each component is a tree. 
\end{itemize} 
\end{lemma}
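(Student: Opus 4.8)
The plan is to read ``minimally $2$-connected'' in the sense fixed by the paper, namely $2$-connected together with chordless, and to use the standard equivalent reformulation that, for a $2$-connected graph, being chordless is the same as being \emph{edge-minimal}: deleting any single edge $e$ (giving the graph $G-e$) destroys $2$-connectivity. One implication of this reformulation is immediate, since the two arcs of a cycle through the endpoints of a chord are two internally disjoint paths avoiding that chord; the other is a short Menger argument. Since $G$ is $2$-connected, every vertex has degree at least $2$, so the vertex set splits as $V(G)=S\cup(V(G)\setminus S)$, where $V(G)\setminus S$ consists of the \emph{branch vertices} of degree at least $3$ and $G\setminus S$ is the subgraph they induce. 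I would treat the two directions separately.

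For the forward direction I would assume $G$ is $2$-connected and chordless and is not a cycle, and prove that $G\setminus S$ is a forest with at least two components. The technical core is the Dirac-type claim that \emph{every cycle of $G$ contains a vertex of degree $2$}; granting it, a cycle of $G\setminus S$ would be a cycle of $G$ meeting no vertex of $S$, which is impossible, so $G\setminus S$ is acyclic. To prove the claim I would argue by contradiction from a cycle $D$ all of whose vertices have degree at least $3$: by chordlessness $D$ is induced and each of its vertices has a neighbour off $D$, so $2$-connectivity yields an ear attached to $D$, necessarily of length at least $2$ (a length-$1$ ear would be a chord of $D$). Applying edge-minimality to an edge $uv$ of $D$ locates a cut vertex of $G-uv$ on the $u$--$v$ arc of $D$, and tracking the off-$D$ neighbours across this separation then forces a degree-$2$ vertex onto $D$ (any alternative producing a chord, which is excluded). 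Establishing this cleanly is the main obstacle of the forward direction. With acyclicity in hand I would rule out the two ways the component count could fail: if $S=V(G)$ then $G$ is $2$-regular and hence a single cycle, contrary to assumption; and if $G\setminus S$ were a single tree, a similar ear/chord analysis---starting from a leaf of the tree, which must send out at least two degree-$2$ chains---produces a chord and hence a contradiction. Thus at least two components survive.

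The converse is where I expect the real difficulty, and here I would proceed with caution, because the hypothesis ``$G\setminus S$ is a forest with at least two components'' is necessary but, on its own, is \emph{not} sufficient for chordlessness. For a counterexample, subdivide every edge of $K_4$ except the perfect matching $\{12,34\}$: the four subdivision vertices have degree $2$, the vertices $1,2,3,4$ have degree $3$, and $G\setminus S$ is exactly the two single-edge trees $\{1,2\}$ and $\{3,4\}$---two tree components---yet the edge $12$ is a chord of the $8$-cycle running through all four subdivision vertices, so this graph is $2$-connected and not chordless. Consequently a correct converse must read the structural condition in the edge-minimal sense: what one can prove is that if $G$ is $2$-connected with $G\setminus S$ a forest \emph{and} every branch--branch edge $xy$ is such that $G-xy$ has a cut vertex, then $G$ is chordless. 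Deleting an edge incident to a degree-$2$ vertex always exposes a cut vertex trivially, so the whole weight of the converse sits on the branch--branch edges, and the point to be nailed down is precisely the absence of two vertex-disjoint degree-$2$ chains joining the two sides of such an edge.
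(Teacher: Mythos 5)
The paper gives no proof of this lemma at all: it is quoted as a result of Plummer, so there is nothing in the text to compare your argument against step by step. The most substantive part of your proposal is the claim that the backward implication is false as stated, and your counterexample is correct. Take $K_4$ on $\{1,2,3,4\}$, keep the edges $12$ and $34$, and subdivide each of $13,14,23,24$ once, introducing degree-$2$ vertices $a,b,c,d$ respectively. The resulting graph is $2$-connected (a subdivision of $K_4$), the set $S$ of degree-$2$ vertices is $\{a,b,c,d\}$, and $G\setminus S$ is the disjoint union of the two single-edge trees on $\{1,2\}$ and on $\{3,4\}$, so the second bullet of the lemma holds; yet $12$ is a chord of the $8$-cycle $1,a,3,c,2,d,4,b,1$, so $G$ is not chordless and hence not minimally $2$-connected under the paper's definition. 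Thus only the forward implication holds in general; fortunately that is the only direction the paper uses (in the proof of Lemma \ref{degreeLemma}), so the results downstream are unaffected, but the lemma ought to be stated as a one-way implication. Your proposed repair of the converse---adding the condition that deleting any edge between two branch vertices destroys $2$-connectivity---is essentially a restatement of edge-minimality and therefore changes the lemma rather than proving it.

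As a proof of the forward direction, what you have is a sketch with its central step missing. Everything hinges on the claim that in a minimally $2$-connected graph every cycle contains a vertex of degree $2$ (a classical fact due to Dirac and to Plummer), and you explicitly defer its proof, calling it ``the main obstacle''; the ``at least two components'' count is likewise only gestured at via an ear-and-leaf analysis that is not carried out. The route you outline is the standard one and is plausible, but as written it does not constitute a proof. If the lemma is kept as a citation none of this matters; if a self-contained argument is wanted, the degree-$2$-vertex-on-every-cycle claim is the step that must actually be written out.
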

We use this lemma to prove the following. 
\begin{lemma}
\label{degreeLemma}
Let $G$ be a minimally $2$-connected graph. 
Then $\forall x \in V(G)$, $\exists u,v,w \in V(G)$ such that $v,w \in N_G(u)$, $deg_G(v) = deg_G(w) = 2$, and $x \notin \{v,w\}$. 
\end{lemma}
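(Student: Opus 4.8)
The plan is to invoke Plummer's characterization (Lemma~\ref{partitionlemma}) and handle separately the two cases it provides. If $G$ is a cycle, the statement is immediate: every vertex has degree $2$, so I would take $u = x$ and let $v, w$ be its two neighbours. Then $v, w \in N_G(u)$, $deg_G(v) = deg_G(w) = 2$, and since the two neighbours of any vertex on a cycle are distinct from that vertex, we have $x \notin \{v, w\}$. The remaining work is therefore entirely in the second case, where $G$ is not a cycle.

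So suppose $G$ is not a cycle, and let $S$ denote the set of degree-$2$ vertices; by Lemma~\ref{partitionlemma}, $G \setminus S$ has at least two components, each a tree. The first observation I would record is that every vertex outside $S$ has degree at least $3$ in $G$, since such a vertex has degree $\neq 2$ (it avoids $S$) and degree $\geq 2$ (as $G$ is $2$-connected). The central structural observation is then that any leaf $u$ of a tree component $T$ has at least two neighbours in $S$: it has exactly one neighbour inside $T$, while any further neighbour must lie in $S$, because an edge from $u$ to a vertex outside $S$ would place that vertex in the same component $T$. Hence every such leaf $u$ has at least two degree-$2$ neighbours. The same reasoning shows that a component consisting of a single vertex $u$ has all of its (at least three) neighbours in $S$.

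These observations supply candidate vertices $u$, each with at least two degree-$2$ neighbours, and the main obstacle is to honour the constraint $x \notin \{v, w\}$ by selecting a $u$ whose degree-$2$ neighbourhood avoids $x$. The key point I would exploit is that $x$ can spoil a candidate $u$ only by itself being a degree-$2$ neighbour of $u$, which forces $deg_G(x) = 2$; but then $x$ has only two neighbours in all of $G$, so at most two candidates can be spoiled. If some tree component is a single vertex $u$, it alone furnishes at least three degree-$2$ neighbours, of which at least two differ from $x$, and we are done at once. Otherwise each of the (at least two) tree components contributes at least two leaves, giving at least four candidates against at most two spoiled ones, so an unspoiled candidate $u$ survives. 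Choosing such a $u$ and any two of its degree-$2$ neighbours different from $x$ as $v$ and $w$ then completes the argument.
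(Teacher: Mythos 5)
Your proposal is correct and follows essentially the same route as the paper: the cycle case is handled identically, and in the non-cycle case both arguments use Plummer's characterization to extract either an isolated vertex of $G\setminus S$ (with at least three degree-$2$ neighbours) or four leaves from two tree components, and then observe that a degree-$2$ vertex $x$ can be adjacent to at most two of these candidates. Your "spoiling" count is just a rephrasing of the paper's remark that no three vertices of $G\setminus S$ share a neighbour in $S$.
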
 
\begin{proof}
Consider any $x \in V(G)$. If $G$ is a cycle, then let $u = x$. Clearly, $u$ has two neighbours of degree $2$ none of which is $x$.  Suppose $G$ is not a cycle. Then, by Lemma \ref{partitionlemma}, $G \setminus S$ is a forest with at least $2$ components, where $S$ is the set  of nodes of degree $2$ in $G$. If $G \setminus S$ has any isolated vertex, then it has at least $3$ neighbours in $S$ (each of degree $2$) as each vertex outside $S$ has a degree at least $3$. Let the isolated vertex be our vertex $u$. Clearly, $u$ has two neighbours each of degree $2$ in its neighbourhood such that none of them is $x$. Suppose $G \setminus S$ does not contain any isolated vertex. We know that $G \setminus S$ contains at least two trees, say $T_1$ and $T_2$, by Lemma \ref{partitionlemma}. Let $l_1^1$ and $l_2^1$ be two leaf vertices of $T_1$. Similarly, let $l_1^2$ and $l_2^2$ be two leaf vertices of $T_2$. For all $i,j \in \{1,2\}$, since $l_i^j$ has degree at least $3$ in $G$, it has at least $2$ neighbours (each of degree $2$) in $S$. What is left is to show that, for some $i,j \in \{1,2\}$, $l_i^j$ ($=u$) has two neighbours each of degree $2$ in $S$ such that none of the two neighbours is $x$. Since every vertex of $S$ has its degree equal to $2$ in $G$, no three vertices in $G\setminus S$ can have the same neighbour  in $S$. Since $l_i^j$ form a collection of four vertices, there exists one vertex in this collection (which forms the vertex $u$) such that it has two neighbours of degree $2$ none of which is $x$. 
\end{proof}
\begin{theorem}
Let $G$ be a chordless graph on $n$ vertices whose list chromatic number is $s$. Then for every $t$, where $0 < t <  s$, $\lambda_t(G) \geq \frac{tn}{s}$. 
\end{theorem}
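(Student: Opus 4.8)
The plan is to first observe that a chordless graph is $2$-degenerate, hence $\chi_L(G)\le 3$, so $s\in\{1,2,3\}$ and the only pairs $(s,t)$ with $0<t<s$ that can occur are $(2,1)$, $(3,1)$ and $(3,2)$. For $t=1$ the result is already available: the bound $\lambda_t(G)\ge n/\lceil s/t\rceil$ of \cite{haas2003bounds} gives $\lambda_1(G)\ge n/s = tn/s$ for every $s$, settling both $(2,1)$ and $(3,1)$. Thus the whole theorem reduces to the single case $s=3$, $t=2$, where I must prove $\lambda_2(G)\ge \frac{2n}{3}$. I would in fact prove the slightly stronger statement that \emph{every} $2$-assignment on \emph{any} chordless graph on $n$ vertices colours at least $\frac{2n}{3}$ of its vertices; this is exactly what is needed when $\chi_L(G)=3$ and holds trivially when $\chi_L(G)\le 2$, since such a graph is $2$-choosable and is therefore fully coloured by any $2$-assignment. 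I would argue by induction on $n$, using that chordlessness is hereditary under taking induced subgraphs so that the inductive hypothesis applies to every vertex-deleted subgraph.

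For the easy reductions, if $G$ is disconnected I would apply the inductive hypothesis to each (strictly smaller) component and add up. If $G$ has a vertex $z$ with $deg_G(z)\le 1$, I would delete $z$, colour at least $\frac{2}{3}(n-1)$ vertices of $G\setminus z$ by induction, and then colour $z$: its at most one coloured neighbour forbids at most one of the two colours in its list, so a colour is always available. This already yields at least $\frac{2}{3}(n-1)+1\ge \frac{2n}{3}$ coloured vertices. Hence I may assume $G$ is connected with minimum degree at least $2$, so in particular $n\ge 3$.

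In this remaining case the goal is to locate a reducible triple $u,v,w$ with $v,w\in N_G(u)$ and $deg_G(v)=deg_G(w)=2$. If $G$ is $2$-connected then, being chordless, it is minimally $2$-connected, and Lemma \ref{degreeLemma} supplies such a triple directly. Otherwise I would take an end block $B$ of $G$, which contains a single cut vertex $c$; since the minimum degree is at least $2$, $B$ is $2$-connected and, being chordless, minimally $2$-connected, so applying Lemma \ref{degreeLemma} to $B$ with $x=c$ produces a vertex $u$ with two neighbours $v,w$ of degree $2$ in $B$ satisfying $v,w\ne c$. As $v,w$ are non-cut vertices of the end block $B$, all their edges lie inside $B$, whence $deg_G(v)=deg_G(w)=2$ as well. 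With the triple in hand I would delete $\{u,v,w\}$, colour at least $\frac{2}{3}(n-3)$ vertices of the still chordless graph $G\setminus\{u,v,w\}$ by induction, and then extend the colouring to $v$ and $w$. Writing $v',w'$ for the second neighbours of $v$ and $w$, in $G\setminus u$ the vertex $v$ retains only its edge to $v'$ and $w$ only its edge to $w'$; unless we are in the triangle case $v'=w$ (which forces $w'=v$), the vertices $v,w$ are non-adjacent and each has at most one coloured neighbour, so each can avoid a single forbidden colour, whereas in the triangle case $vw$ is an isolated edge whose two lists of size $2$ can always be properly coloured. In every case two further vertices are coloured without disturbing the colours on $G\setminus\{u,v,w\}$, giving at least $\frac{2}{3}(n-3)+2=\frac{2n}{3}$, which closes the induction.

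The main obstacle I anticipate is producing the reducible triple when $G$ is \emph{not} $2$-connected, since Lemma \ref{degreeLemma} speaks only about minimally $2$-connected graphs: the end-block argument, together with the observation that a degree-$2$ non-cut vertex of a block keeps that degree in $G$, is what bridges this gap. The other delicate point is the recolouring step, where one must check the degenerate coincidences among $u,v,w,v',w'$ — most importantly the triangle case $v'=w$ — to be certain that $v$ and $w$ can be coloured simultaneously, and that doing so never conflicts with the colours already assigned to the recursively coloured part.
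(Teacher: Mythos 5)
Your proposal is correct and follows essentially the same route as the paper: reduce to $s=3$, $t=2$, handle $t=1$ separately, and induct on $n$ by deleting either a vertex of degree at most $1$ or the triple $u,v,w$ supplied by Lemma \ref{degreeLemma} applied to a leaf block. Your treatment is somewhat more careful than the paper's on the minor points (disconnected graphs, why an end block is $2$-connected under minimum degree $2$, and the triangle case $v'=w$), but these are refinements of the same argument rather than a different approach.
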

\begin{proof}
Since every chordless graph is $2$-degenerate, $s \leq 3$. It is easy to see that the theorem is trivially true when $s \leq 2$. Assume $s =3$. We know that, when $t=1$, $\lambda_1(G) \geq \frac{n}{3}$ since the largest independent set in $G$ is of size at least $\frac{n}{3}$. Let $t=2$. In rest of the proof, using an induction on $n$, we show that $\lambda_2(G) \geq \frac{2n}{3}$.  
Suppose $G$ contains a vertex $v$ of degree at most $1$. By induction hypothesis, $\lambda_2(G \setminus v) \geq \frac{2(n-1)}{3}$. Since $v$ has only one neighbour in $G$, we can add back $v$ to $G \setminus v$ and colour $v$ with a colour in its list that does not conflict with its neighbour's colour. Thus, $\lambda_2(G) \geq \frac{2n}{3}$. Suppose every vertex in $G$ is of degree at least $2$. Consider the block graph of $G$ and consider a leaf block $B$ in it. Since $G$ does not have isolated vertices, $B$ is $2$-connected. Moreover, since $G$ is a chordless graph $B$ is minimally $2$-connected. Let $x \in V(B)$ be the cut vertex whose removal separates $B$ from rest of the graph. By Lemma \ref{degreeLemma}, $\exists u,v,w \in V(B)$ such that $v,w \in N_B(u)$, $x \notin \{v,w\}$, and $deg_B(v) = deg_B(w) = 2$. Since neither $v$ nor $w$ is a cut vertex, their degrees in $G$ remain the same, i.e., $deg_G(v) = deg_G(w) = 2$. Now, let $S = \{u,v,w\}$. By induction hypothesis, $\lambda_2(G \setminus S) \geq \frac{2(n-3)}{3}$. 
Consider adding $u,v$ and $w$ back in  $G \setminus S$. It is easy to see that if we don't colour vertex $u$, then both $v$ and $w$ have at most one of their neighbours coloured and therefore can be properly coloured using a colour from their respective lists. Thus, $\lambda_2(G) \geq \frac{2(n-3)}{3} + 2 = \frac{2n}{3}$. 
\end{proof} 

\subsection{Series-parallel graphs}
\label{Ser-Par-SubSec}
A connected \emph{series-parallel} graph is a graph with two designated vertices $s$ and $t$ and the graph can be turned into a $K_2$  by a sequence of the following operations: (a) Replacement of a pair of edges with a single edge that connects their endpoints, and (b) Replacement of a pair of edges incident to a vertex of degree $2$ other than $s$ or $t$ with a single edge. Note that outerplanar graphs are series-parallel. From the definition, it is easy to see that series-parallel graphs are $2$-degenerate and therefore $3$-choosable. Hence, in order to prove the partial list colouring conjecture for a series-parallel graph $G$ on $n$ vertices, it is enough to show that $\lambda_2(G) \geq \frac{2n}{3}$. We prove this by using the fact that series-parallel graphs are precisely the class of graphs with treewidth at most $2$.  Before we prove this result, let us explore the connection between the treewidth of a graph $G$ and $\lambda_t(G)$. 
\begin{proposition}
\label{chromaticProp}
Let $\mathcal{G}$ be a hereditary graph family. If  $\forall G \in \mathcal{G}$, $\chi(G) = \chi_L(G)$ then  for every $t$, where $0 < t \leq  \chi_L(G)$, $\lambda_t(G) \geq \frac{tn}{\chi_L(G)}$. 
\end{proposition}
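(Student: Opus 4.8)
The plan is to reduce the list-colouring statement to an ordinary-colouring fact by exploiting both hypotheses on $\mathcal{G}$ simultaneously: that it is hereditary and that $\chi = \chi_L$ throughout. Fix $G \in \mathcal{G}$ on $n$ vertices and write $s = \chi_L(G)$; since $G \in \mathcal{G}$ we also have $\chi(G) = s$. Fix an arbitrary $t$-assignment $\mathcal{L}_t$ and a target $t$ with $0 < t \le s$. The goal is to exhibit a single induced subgraph of order at least $\frac{tn}{s}$ that is $\mathcal{L}_t$-list colourable; since $\mathcal{L}_t$ is arbitrary, this yields $\lambda_t(G) \ge \frac{tn}{s}$.

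First I would take a proper colouring of $G$ with the $s = \chi(G)$ colour classes $V_1, \dots, V_s$, relabelled so that $|V_1| \ge \cdots \ge |V_s|$, and set $G' := G[V_1 \cup \cdots \cup V_t]$, the subgraph induced on the $t$ largest classes. A short averaging argument shows $|V(G')| \ge \frac{tn}{s}$: the average size of the top $t$ classes is at least $|V_t| \ge |V_{t+1}|$, which in turn is at least the average size of the remaining $s-t$ classes, so $(s-t)\sum_{i \le t} |V_i| \ge t \sum_{i > t} |V_i|$, and this rearranges to $\sum_{i \le t} |V_i| \ge \frac{t}{s}\sum_{i} |V_i| = \frac{tn}{s}$.

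The crux of the argument is the next observation. The classes $V_1, \dots, V_t$ remain independent in $G'$ and cover $V(G')$, so $\chi(G') \le t$. Because $\mathcal{G}$ is hereditary, $G' \in \mathcal{G}$, and therefore the hypothesis applies to $G'$ and gives $\chi_L(G') = \chi(G') \le t$. Hence $G'$ is $t$-choosable, and in particular it is $\mathcal{L}_t$-list colourable for the restriction of $\mathcal{L}_t$ to $V(G')$. This produces the required $\mathcal{L}_t$-list colourable induced subgraph of order at least $\frac{tn}{s}$, completing the proof.

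I do not expect a genuine technical obstacle here; the entire difficulty is packaged into correctly invoking the two structural hypotheses, and the argument is really the standard ``top $t$ colour classes'' translation of the chromatic-number bound into this setting. The one point that needs care is the logical direction: the hypothesis $\chi = \chi_L$ must be applied to the induced subgraph $G'$ rather than to $G$ itself, which is legitimate precisely because $\mathcal{G}$ is closed under taking induced subgraphs. Everything else — the choice of the $t$ largest classes and the averaging bound — is routine.
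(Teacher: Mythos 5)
Your proof is correct and follows essentially the same route as the paper: take a proper $s$-colouring, keep the $t$ largest colour classes to get an induced subgraph $G'$ of order at least $\frac{tn}{s}$, and use the hereditary hypothesis to apply $\chi=\chi_L$ to $G'$, concluding that $G'$ is $t$-choosable. Your write-up is slightly more careful than the paper's (you correctly write $\chi(G')\le t$ rather than asserting equality, and you spell out the averaging step), but the argument is the same.
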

\begin{proof}
The proof is straightforward. Consider a graph $G \in \mathcal{G}$. Let $\chi_L(G) = \chi(G) = s$. Remove the vertices belonging to the smallest $s-t$ colour classes in a proper $s$-colouring of $G$ to obtain a graph $G'$ of size at least $\frac{tn}{s}$. Since $G'\in \mathcal{G}$, $\chi_L(G') = \chi(G') = t$.   
\end{proof}
A graph is \emph{chordal} if it does not contain any induced cycle of size greater than $3$. Note that chordal graphs form a hereditary family of graphs. Moreover, for every chordal graph $G$, $\chi(G)=\chi_L(G)=\omega(G)$ (clique number). Since chordal graphs form a hereditary family of graphs satisfying the condition given in  Proposition \ref{chromaticProp}, we have the following corollary.  
\begin{corollary}
\label{chordalCorollary}
Let $G$ be a chordal graph on $n$ vertices whose list chromatic number is $s$. Then for every positive integer $t$ less than $s$, $\lambda_t(G) \geq \frac{tn}{s}$. 
\end{corollary}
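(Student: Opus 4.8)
The plan is to deduce the statement directly from Proposition \ref{chromaticProp} by taking $\mathcal{G}$ to be the family of all chordal graphs. To invoke the proposition I need to verify its two hypotheses for this family: that it is hereditary, and that $\chi(G) = \chi_L(G)$ for every member $G$. Both are classical facts about chordal graphs, so the work is essentially in recording why they hold.

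Heredity is immediate: if $H$ is an induced subgraph of a chordal graph $G$, then any induced cycle of $H$ is also an induced cycle of $G$, so $H$ inherits the absence of induced cycles of length greater than $3$, and hence $H$ is chordal. For the equality $\chi = \chi_L$, I would use the fact that every chordal graph admits a \emph{perfect elimination ordering} $v_1, \dots, v_n$, meaning that for each $i$ the later-neighbourhood $N_G(v_i) \cap \{v_{i+1}, \dots, v_n\}$ induces a clique. Colouring the vertices in reverse order $v_n, \dots, v_1$, when $v_i$ is reached its already-coloured neighbours form such a clique and therefore number at most $\omega(G) - 1$. Consequently, for any $\omega(G)$-assignment, each vertex sees fewer than $\omega(G)$ forbidden colours when it is processed and can always be coloured from its own list; this shows $\chi_L(G) \leq \omega(G)$. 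Since trivially $\omega(G) \leq \chi(G) \leq \chi_L(G)$, all three quantities coincide, and in particular $\chi(G) = \chi_L(G)$.

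With both hypotheses established, Proposition \ref{chromaticProp} applies and yields $\lambda_t(G) \geq \frac{tn}{\chi_L(G)} = \frac{tn}{s}$ for every $t$ with $0 < t < s$, which is exactly the desired bound. I do not anticipate any real obstacle here: the only substantive ingredient is the perfect elimination ordering, a standard characterisation of chordal graphs, and the reduction to Proposition \ref{chromaticProp} is then purely formal. The only point deserving a moment's care is to confirm that the greedy argument above indeed bounds the \emph{list} chromatic number (and not merely the chromatic number), which it does precisely because the bound on the number of previously coloured neighbours is independent of which lists are assigned.
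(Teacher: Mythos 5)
Your proposal is correct and follows exactly the paper's route: the paper also derives this corollary by applying Proposition \ref{chromaticProp} to the hereditary family of chordal graphs, citing $\chi(G)=\chi_L(G)=\omega(G)$ as a known fact. Your extra justification of $\chi_L(G)\leq\omega(G)$ via a perfect elimination ordering is a sound filling-in of a step the paper leaves to the reader.
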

We know that the treewidth of a graph $G$, denoted by $tw(G)$, is one less than the largest clique in the chordal graph containing $G$ with the smallest clique number. The corollary below relates $\lambda_t(G)$ with the treewidth of $G$. 
\begin{corollary}
\label{twCorollary}
Consider a graph $G$ on $n$ vertices whose list chromatic number is $s$. For every positive integer $t$ less than $s$, $\lambda_t(G) \geq \frac{tn}{tw(G) + 1}$. 
\end{corollary}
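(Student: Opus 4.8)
The plan is to reduce the claim to Corollary~\ref{chordalCorollary} by sandwiching $G$ between itself and a chordal supergraph that witnesses its treewidth. Using the definition of treewidth recalled just above the statement, I would fix a chordal graph $H$ on the vertex set $V(G)$ with $E(G) \subseteq E(H)$ and with $\omega(H)$ as small as possible, so that $tw(G) = \omega(H) - 1$. Since $H$ is chordal, $\chi_L(H) = \chi(H) = \omega(H)$; write $s' := \chi_L(H) = tw(G) + 1$.

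The key structural point is that list colourability only becomes harder as edges are added. As $G$ is a spanning subgraph of $H$, every $k$-choosability certificate for $H$ is one for $G$, whence $s = \chi_L(G) \le \chi_L(H) = s'$. Consequently, any $t$ with $0 < t < s$ also satisfies $0 < t < s'$, so Corollary~\ref{chordalCorollary} is applicable to the chordal graph $H$ and gives $\lambda_t(H) \ge \frac{tn}{s'} = \frac{tn}{tw(G)+1}$.

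It then remains to transfer this bound from $H$ down to $G$. I would fix an arbitrary $t$-assignment $\mathcal{L}_t$ (which, having the same vertex set, serves equally well for $G$ and for $H$) and take $W \subseteq V(G)$ to be the vertex set of a largest $\mathcal{L}_t$-list colourable induced subgraph of $H$, so $|W| = \lambda_{\mathcal{L}_t}(H)$. Because $E(G[W]) \subseteq E(H[W])$, any proper $\mathcal{L}_t$-list colouring of $H[W]$ is automatically a proper $\mathcal{L}_t$-list colouring of $G[W]$; hence $G[W]$ is $\mathcal{L}_t$-list colourable and $\lambda_{\mathcal{L}_t}(G) \ge |W| = \lambda_{\mathcal{L}_t}(H)$. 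Minimising over all $t$-assignments then yields $\lambda_t(G) \ge \lambda_t(H) \ge \frac{tn}{tw(G)+1}$, as required.

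I do not anticipate a genuine obstacle, as the whole argument is a comparison between $G$ and its chordal completion $H$. The only two points needing attention are that $t$ remains in the admissible range for $H$ (ensured by $s \le s'$) and that a colouring valid on the denser graph $H[W]$ stays valid on the sparser graph $G[W]$, which is immediate from $E(G) \subseteq E(H)$.
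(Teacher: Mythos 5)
Your proof is correct and takes essentially the same route as the paper: pass to a chordal completion $H$ with $\omega(H)=tw(G)+1$, apply Corollary~\ref{chordalCorollary} to $H$, and transfer the bound back to $G$ since removing edges cannot invalidate a proper list colouring. The only difference is cosmetic: you explicitly verify that $t<\chi_L(H)$ via $\chi_L(G)\le\chi_L(H)$, a point the paper leaves implicit.
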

\begin{proof}
Let $G'$ be a chordal graph obtained by adding edges to $G$ such that the size of a largest clique in $G'$, denoted by $\omega(G')$, is $tw(G)+1$.  Since $tw(G)+1 = \omega(G')=\chi(G') = \chi_L(G')$, from Corollary \ref{chordalCorollary},  we get $\lambda_t(G') \geq \frac{tn}{tw(G) + 1}$. As $G$ is a subgraph of $G'$, any proper colouring of $G'$ is a proper colouring for $G$ as well. Thus we get $\lambda_t(G) \geq \frac{tn}{tw(G) + 1}$. 
\end{proof}
The following result on series-parallel graphs follows directly from Corollary \ref{twCorollary}. 
\begin{corollary}
Let $G$ be a series-parallel graph on $n$ vertices whose list chromatic number is $s$. Then for every positive integer $t$ less than $s$, $\lambda_t(G) \geq \frac{tn}{s}$. 
\end{corollary}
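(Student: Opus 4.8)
The plan is to reduce this final corollary entirely to the machinery already assembled in this subsection, so that almost no new work is required. The key structural fact to invoke is that series-parallel graphs are exactly the graphs of treewidth at most $2$; this is the equivalence the paper flags at the start of the subsection. Given that, the proof is a single application of Corollary \ref{twCorollary}: for a series-parallel graph $G$ on $n$ vertices we have $tw(G) \leq 2$, hence $tw(G)+1 \leq 3$, and Corollary \ref{twCorollary} yields
\begin{eqnarray}
\label{spbound}
\lambda_t(G) &\geq& \frac{tn}{tw(G)+1} \geq \frac{tn}{3}.
\end{eqnarray}

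The remaining step is to argue that this bound is in fact the bound $\frac{tn}{s}$ claimed in the statement. Here I would use the fact that series-parallel graphs are $2$-degenerate and therefore $3$-choosable, so the list chromatic number satisfies $s = \chi_L(G) \leq 3$. The conjecture is only asserted for $t < s$, so the interesting range is $s = 3$ (the cases $s \leq 2$ being trivial, as noted for chordless graphs earlier), and in that range $\frac{tn}{s} = \frac{tn}{3}$, which is exactly the bound delivered by (\ref{spbound}). More generally, since $s \leq tw(G)+1$ always holds (the chordal completion witnessing the treewidth has clique number, hence chromatic number and list chromatic number, equal to $tw(G)+1$, and $G$ is a subgraph of it), we get $\frac{tn}{tw(G)+1} \geq \frac{tn}{s}$ whenever the bound from Corollary \ref{twCorollary} is at least as strong as the desired one; so the corollary follows directly.

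I do not expect any genuine obstacle here, since every ingredient has been established: Proposition \ref{chromaticProp} for hereditary families, Corollary \ref{chordalCorollary} for chordal graphs, and Corollary \ref{twCorollary} relating $\lambda_t(G)$ to treewidth. The only point requiring a moment's care is confirming that $s \leq tw(G)+1$, so that the treewidth bound is no weaker than the target $\frac{tn}{s}$; this is immediate because $\chi_L(G) \leq \chi_L(G') = tw(G)+1$ for the chordal completion $G'$ of $G$ used in the proof of Corollary \ref{twCorollary}. Thus the entire argument is a two-line deduction, which is presumably why the paper states that the result ``follows directly from Corollary \ref{twCorollary}.''
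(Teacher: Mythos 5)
Your core argument is correct and is exactly the paper's route: series-parallel graphs have treewidth at most $2$, so Corollary \ref{twCorollary} gives $\lambda_t(G) \geq \frac{tn}{3}$, and since $s \leq 3$ (with $s \leq 2$ trivial and $s = 3$ the only substantive case) this equals the target $\frac{tn}{s}$. The paper offers no more than this one-line deduction, so you have reconstructed its proof.

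One caution about your closing ``more generally'' remark: the inequality there is backwards. From $s \leq tw(G)+1$ you get $\frac{tn}{s} \geq \frac{tn}{tw(G)+1}$, i.e.\ the treewidth bound is in general \emph{weaker} than the target, not stronger; the implication $\frac{tn}{tw(G)+1} \geq \frac{tn}{s}$ would require $tw(G)+1 \leq s$. What actually saves the corollary is that in the only nontrivial case $s = 3 = tw(G)+1$, so the two bounds coincide -- which is precisely the case analysis you already carried out correctly in your first paragraph. The general remark as stated is not a valid justification and should be dropped; it does not, however, affect the correctness of the specific proof.
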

\section{$2$-choosable graphs in $3$-choosable graphs}
\label{StrongerConjectureSection}
In this section, we put forth a question which is a (stronger) variant of the partial list colouring conjecture.  
Given a graph $G$ on $n$ vertices with list chromatic number $s$, for any positive integer $t<s$, does $G$ contain a $t$-choosable graph of size $\frac{tn}{s}$?  For instance, observe that the proof of Theorem \ref{largeChromatThm} in fact proves this stronger statement to be true in the context of graphs with large chromatic number. That is, every $s$-choosable graph $G$ with chromatic number at least $\frac{n-1}{2}$ contains a subgraph induced on $\frac{tn}{s}$ vertices that is $t$-choosable. 
It is easy to see that the arguments outlined in Section \ref{Ser-Par-SubSec} can be used to state that the stronger statement holds true for series-parallel graphs as well. In what follows, we show that the answer to this question is not always `yes' by explicitly constructing an infinite family of $3$-choosable graphs where a largest induced $2$-choosable subgraph of each graph in the family is of size at most $\frac{5n}{8}$.

Given a graph $G$ with its vertices arranged in the order $v_1, \ldots , v_n$, a list assignment $\mathcal{L} = \{l(v_i)~:~i \in [n]\}$ is called an $(l_1, \ldots , l_n)$-assignment if $\forall i \in [n],~|l(v_i)|= l_i$. 
We say $G$ is $(l_1, \ldots , l_n)$-list colourable if $G$ is $\mathcal{L}$-list colourable for every $(l_1, \ldots , l_n)$-assignment $\mathcal{L}$.  

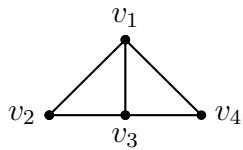
\begin{figure}[!ht]
\begin{center}
\begin{pspicture}(-0.5,-0.5)(2.5,1.5)
	\psline[showpoints=true](0,0)(1,0)(1,1)(0,0)
	\psline[showpoints=true](1,0)(2,0)(1,1)

	\uput[u](1,1){$v_1$}
	\uput[l](0,0){$v_2$}
	\uput[270](1,0){$v_3$}
	\uput[r](2,0){$v_4$}
	
\end{pspicture}
\end{center}
\caption{Diamond graph $D$}
\label{figureDiamond}
\end{figure}

\begin{proposition}
\label{DiamondClaim}
Let $D$ denote the diamond graph (see Figure \ref{figureDiamond}) whose vertices are arranged in the order $v_1, \ldots , v_4$. Then, $D$ is $(2,2,3,2)$-list colourable.  
\end{proposition}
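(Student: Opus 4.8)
<br>

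The plan is to verify the claim by a direct case analysis on the structure of the diamond graph $D$. Recall that $D$ has vertices $v_1, v_2, v_3, v_4$ where $v_1, v_2, v_3$ form a triangle, $v_1, v_3, v_4$ form a triangle, and the two triangles share the edge $v_1 v_3$. Thus $v_3$ is adjacent to all of $v_1, v_2, v_4$, while $v_2$ and $v_4$ are each adjacent only to $v_1$ and $v_3$ (and $v_2 v_4$ is a non-edge). The vertex $v_3$ is the one receiving the list of size $3$, and it is precisely the vertex of highest degree, so placing the larger list there is the natural choice.

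First I would fix an arbitrary $(2,2,3,2)$-assignment $\mathcal{L}$, writing $l(v_1) = \{a_1, a_2\}$, $l(v_2) = \{b_1, b_2\}$, $l(v_3)$ of size $3$, and $l(v_4) = \{d_1, d_2\}$. The key observation I would exploit is that $v_3$ sees only three other vertices, so once $v_1, v_2, v_4$ have been coloured they forbid at most three colours on $v_3$; since $|l(v_3)| = 3$, a naive count is not quite enough, and the argument must be arranged so that $v_3$ is left with an available colour. A clean way to do this is to colour greedily in the order $v_2, v_4, v_1, v_3$ while being careful about the single constraint that could fail, namely that $v_1, v_2, v_4$ might collectively exhaust $l(v_3)$. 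Since $v_2$ and $v_4$ are not adjacent, they may freely receive any colours from their own lists, and $v_1$ only needs to avoid the colour chosen on $v_2$ (as $v_2, v_4$ do not constrain each other and $v_1$ is adjacent to both). The real work is to choose the colours on $v_1, v_2, v_4$ so that their three colours do not cover all of $l(v_3)$.

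The heart of the proof, therefore, is a short case distinction aimed at guaranteeing that at least one colour of $l(v_3)$ survives. I would split on whether the lists of $v_1, v_2, v_4$ can be forced to use at most two distinct colours among the colours of $l(v_3)$, or whether we have enough freedom in one of the size-$2$ lists to steer a chosen colour off of $l(v_3)$. For instance, if some colour in $l(v_3)$ lies outside $l(v_1) \cup l(v_2) \cup l(v_4)$, that colour is immediately available for $v_3$ and we are done after colouring the other three vertices (which is always possible, since $v_1$ has two options and need only avoid the colour on $v_2$, while $v_2, v_4$ are unconstrained by each other). The remaining case is when every colour of $l(v_3)$ appears in the union of the other three lists; here one argues that because each of $l(v_1), l(v_2), l(v_4)$ has only two elements, a suitable selection leaves one element of $l(v_3)$ uncovered — this is the step requiring the most care, and I expect it to be the main obstacle.

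The main obstacle, concretely, is the worst case in which $l(v_3) = \{c_1, c_2, c_3\}$ and the three size-$2$ lists are arranged to make all three colours hard to avoid simultaneously. Since three lists of size $2$ contribute at most six colour-slots but must be chosen so that their realized colours (one per vertex) still miss an element of a fixed $3$-set, a counting or pigeonhole argument should close this: we are choosing three colours (from $v_1, v_2, v_4$) that must cover a $3$-set, yet we retain the freedom to swap within each size-$2$ list, and the non-adjacency of $v_2$ and $v_4$ removes one potential conflict. I would handle this by exhibiting an explicit admissible choice in each sub-configuration of how $\{c_1,c_2,c_3\}$ intersects the three pairs, using the extra degree of freedom that $v_1$ needs only to differ from $v_2$'s colour. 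Once a colour of $l(v_3)$ is shown to be free, assigning it to $v_3$ completes a proper $\mathcal{L}$-list colouring, establishing that $D$ is $(2,2,3,2)$-list colourable.
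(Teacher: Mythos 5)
Your overall plan --- colour $v_1,v_2,v_4$ properly from their lists so that the three colours they receive do not exhaust $l(v_3)$, then finish with $v_3$ --- is sound in principle (it is in fact equivalent to the proposition, since $v_3$ is adjacent to all three other vertices). But as written the argument has two genuine gaps. First, the assertion that ``$v_1$ only needs to avoid the colour chosen on $v_2$'' is false: $v_1$ is adjacent to \emph{both} $v_2$ and $v_4$, and since $|l(v_1)|=2$ the greedy order $v_2,v_4,v_1$ can fail outright. For instance, if $l(v_1)=\{1,2\}$ and you happen to give $v_2$ colour $1$ and $v_4$ colour $2$, then $v_1$ has no admissible colour left; so even your ``easy'' case (some colour of $l(v_3)$ lies outside $l(v_1)\cup l(v_2)\cup l(v_4)$) is not closed, because ``colouring the other three vertices is always possible'' does not hold for an arbitrary choice on $v_2$ and $v_4$. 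Second, and more importantly, the case you yourself flag as the main obstacle --- every colour of $l(v_3)$ appears in $l(v_1)\cup l(v_2)\cup l(v_4)$, and one must pick one colour per list, properly on the path $v_2\,v_1\,v_4$, while missing some element of $l(v_3)$ --- is never actually carried out; you only announce that a pigeonhole or sub-configuration analysis ``should close this.'' That analysis is the entire content of the proposition, so the proposal stops short of a proof.

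For comparison, the paper's proof sidesteps both difficulties with a single case split on $l(v_2)\cap l(v_4)$ (the two non-adjacent, degree-$2$ vertices). If the intersection is non-empty, give $v_2$ and $v_4$ a common colour; then $v_1$ sees only one forbidden colour against a list of size $2$, and $v_3$ sees only two distinct colours on its three neighbours against a list of size $3$, so greedy finishes. If the intersection is empty, colour $v_1$ and then $v_3$ first, choosing their colours so that neither $l(v_2)$ nor $l(v_4)$ is wiped out; this is possible because disjointness means each chosen colour can lie in at most one of $l(v_2)$, $l(v_4)$, and $|l(v_3)|=3$ leaves enough slack in the choice for $v_3$. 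Any repair of your approach will essentially have to rediscover a dichotomy of this kind.
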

\begin{proof}
Let $\mathcal{L} = \{l(v_i)~:~1 \leq i \leq 4\}$ be an arbitrary $(2,2,3,2)$-assignment for $D$.  
If $l(v_2) \cap l(v_4) \neq \emptyset$, then   let us assign a colour $x$ to both $v_2$ and $v_4$, where $x \in  l(v_2) \cap l(v_4)$. It's easy to colour $v_1$ followed by $v_3$ as their respective list sizes  are now one more than the number of colours in their neighbourhood. If $l(v_2) \cap l(v_4) =  \emptyset$, then one can first colour $v_1$ followed by $v_3$ in such a way that both $v_2$ and $v_4$ have a colour in their respective lists unused by any of their neighbours. 
\end{proof}

\begin{figure}[!ht]
\begin{center}
\begin{pspicture}(-0.5,-0.5)(12.5,3.5)
	\psline[showpoints=true](1,0)(0,1)(0,2)(1,3)(2,2)(2,1)(1,0)
	\psline[showpoints=true](1,0)(1,1)(1,2)(1,3)
	\psline[showpoints=true](0,1)(1,1)(2,1)
	\psline[showpoints=true](0,2)(1,2)(2,2)
	\psline[showpoints=true](0,2)(2,1)
	
	\psline[showpoints=true](5,0)(4,1)(4,2)(5,3)(6,2)(6,1)(5,0)
	\psline[showpoints=true](5,0)(5,1)(5,2)(5,3)
	\psline[showpoints=true](4,1)(5,1)(6,1)
	\psline[showpoints=true](4,2)(5,2)(6,2)
	\psline[showpoints=true](4,2)(6,1)

	\psline[showpoints=true](1,0)(5,0)

	\psline[linestyle=dotted](7,1.5)(9,1.5)

	\psline[showpoints=true](11,0)(10,1)(10,2)(11,3)(12,2)(12,1)(11,0)
	\psline[showpoints=true](11,0)(11,1)(11,2)(11,3)
	\psline[showpoints=true](10,1)(11,1)(12,1)
	\psline[showpoints=true](10,2)(11,2)(12,2)
	\psline[showpoints=true](10,2)(12,1)

	\psline[showpoints=true](1,0)(5,0)
	\psline(11,0)(9,0)
	\uput[u](1,3){$v_{1,1}$}
	\uput[u](5,3){$v_{2,1}$}
	\uput[u](11,3){$v_{r,1}$}
	
	\uput[l](0,2){$v_{1,2}$}
	\uput[-45](1,2){$v_{1,3}$}
	\uput[r](2,2){$v_{1,4}$}
	\uput[l](4,2){$v_{2,2}$}
	\uput[-45](5,2){$v_{2,3}$}
	\uput[r](6,2){$v_{2,4}$}
	\uput[l](10,2){$v_{r,2}$}
	\uput[-45](11,2){$v_{r,3}$}
	\uput[r](12,2){$v_{r,4}$}

	\uput[l](0,1){$v_{1,5}$}
	\uput[135](1,1){$v_{1,6}$}
	\uput[r](2,1){$v_{1,7}$}
	\uput[l](4,1){$v_{2,5}$}
	\uput[135](5,1){$v_{2,6}$}
	\uput[r](6,1){$v_{2,7}$}
	\uput[l](10,1){$v_{r,5}$}
	\uput[135](11,1){$v_{r,6}$}
	\uput[r](12,1){$v_{r,7}$}

	\uput[270](1,0){$v_{1,8}$}
	\uput[270](5,0){$v_{2,8}$}
	\uput[270](11,0){$v_{r,8}$}

\end{pspicture}
\end{center}
\caption{Graph $H$}
\label{figureH}
\end{figure}

\begin{lemma}
\label{3choosableLemma}
Let $H$ denote the graph in Figure \ref{figureH}. Then, $H$ is $3$-choosable. 
\end{lemma}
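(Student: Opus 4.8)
The plan is to exploit the modular structure visible in Figure \ref{figureH}: $H$ is built from $r$ identical gadgets on the vertex sets $\{v_{i,1},\dots,v_{i,8}\}$, the only edges joining distinct gadgets are those of the cycle $C$ through the bottom vertices $v_{1,8}v_{2,8}\cdots v_{r,8}v_{1,8}$, and within each gadget both $\{v_{i,1},v_{i,2},v_{i,3},v_{i,4}\}$ and $\{v_{i,5},v_{i,6},v_{i,7},v_{i,8}\}$ induce a diamond (a copy of $D$). Given an arbitrary $3$-assignment $\mathcal{L}$, I would colour $H$ in two phases. First I would properly $\mathcal{L}$-list-colour the cycle $C$; this is immediate, since a cycle is $2$-degenerate and hence $3$-choosable. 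Since distinct gadgets share no vertex and every inter-gadget edge is incident to some $v_{i,8}$, which is coloured in phase one, the extensions into the individual gadgets in phase two are mutually independent. Thus it suffices to prove a single \emph{gadget-extension} statement: for any colour $c$ placed on $v_{i,8}$ and any $3$-lists on $v_{i,1},\dots,v_{i,7}$, the gadget can be completed.

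To prove the gadget-extension statement (dropping the index $i$), I would colour the bottom path $v_5-v_6-v_7$ first and the top diamond $\{v_1,v_2,v_3,v_4\}$ last. The reason for this order is that $v_1$ has all of its neighbours inside the top diamond, so it keeps its full list of size $3$, while after the bottom is coloured the vertex $v_3$ (whose only neighbour among $v_5,v_6,v_7$ is $v_6$) and the vertex $v_4$ (whose only such neighbour is $v_7$) automatically retain lists of size at least $2$. Consequently the top diamond is presented with $v_1$ carrying a list of size $3$ and $v_2,v_3,v_4$ carrying lists of size at least $2$, which is exactly the situation of Proposition \ref{DiamondClaim}, with the degree-three vertex $v_1$ in the role of the size-three vertex $v_3$ of that proposition; the top diamond can therefore be finished. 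The whole difficulty is thereby concentrated in one requirement on the bottom colouring: the vertex $v_2$, whose only neighbours among $v_5,v_6,v_7$ are $v_5$ and $v_7$, must also retain a list of size at least $2$, i.e. the colours of $v_5$ and $v_7$ must not delete two distinct colours from $l(v_2)$.

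The main obstacle is showing that the bottom path can always be coloured so as to protect $v_2$, and I would settle it by a short case analysis on the available lists $A_5=l(v_5)\setminus\{c\}$, $A_7=l(v_7)\setminus\{c\}$, and $A_6=l(v_6)\setminus\{c\}$, each of size at least $2$. If $A_5\cap A_7$ contains a colour $z$, I would first pick the colour of $v_6$ in $A_6\setminus\{z\}$ (nonempty) and then set both $v_5$ and $v_7$ to $z$ (legitimate, since $v_5\not\sim v_7$); then $v_5,v_7$ delete at most one colour from $l(v_2)$. If instead $A_5\cap A_7=\emptyset$, then $|A_5|+|A_7|\ge 4>3=|l(v_2)|$, so some colour $y\in(A_5\cup A_7)\setminus l(v_2)$ exists; say $y\in A_7$ (the other case is symmetric). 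I would set the colour of $v_7$ to $y$, choose the colour of $v_6$ in $A_6\setminus\{y\}$, and then the colour of $v_5$ in $A_5$ avoiding $v_6$'s colour; since $y\notin l(v_2)$, only $v_5$ can delete a colour from $l(v_2)$, so again at most one is lost. In every case $v_2$ keeps at least two colours and the top diamond is completed through Proposition \ref{DiamondClaim}. Assembling these gadget extensions over all $i$ produces an $\mathcal{L}$-list colouring of $H$, proving that $H$ is $3$-choosable. I expect the only delicate point to be the bookkeeping in this case analysis, namely keeping the path-properness of $v_5v_6v_7$ consistent with the protective choice for $v_2$ while also respecting the fixed colour $c$ on $v_8$.
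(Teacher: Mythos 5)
Your argument is correct, and it reaches the conclusion by a genuinely different route from the paper's. The paper fixes the ordering $v_{i,1},\dots,v_{i,8}$ and shows each gadget is $(3,3,3,3,3,3,3,2)$-list colourable via a case analysis on the \emph{original} lists --- whether $l(v_2)\cap l(v_4)\neq\emptyset$, whether $l(v_5)\cap l(v_7)\neq\emptyset$, or neither --- invoking Proposition \ref{DiamondClaim} in each branch. You instead colour all the bottom vertices $v_{i,8}$ first along the connecting path/cycle, decouple the gadgets, and then concentrate the entire difficulty into one clean requirement: colour the path $v_5v_6v_7$ (with lists already reduced by the colour on $v_8$) so that $v_2$ loses at most one colour. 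Your two cases on $A_5\cap A_7$ are exhaustive and both check out --- in particular the counting step $|A_5\cup A_7|\ge 4>|l(v_2)|$ is the right observation, and the non-adjacency of $v_5$ and $v_7$ and the adjacencies you cite for $v_1,\dots,v_4$ all match Figure \ref{figureH}. Your appeal to Proposition \ref{DiamondClaim} with the roles of the two degree-three vertices swapped is legitimate by the automorphism of the diamond (the paper makes the same implicit use when it claims $(3,2,2,2)$-list colourability). What your version buys is a more explicit treatment of the inter-gadget coupling, which the paper handles only implicitly by weakening $v_{i,8}$'s list to size $2$, and a single uniform reduction to the diamond rather than three separate branches; the paper's version, in exchange, never needs to argue about which colours on $v_5,v_7$ threaten $v_2$, since it matches colours across the non-adjacent pairs directly.
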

\begin{proof}
Let $H_i$ denote a subgraph of $H$ induced on the vertex set $\{v_{i,1}, \ldots , v_{i,8}\}$. In order to prove  the lemma, it is enough to  prove that $H_i$ is $(3,3,3,3,3,3,3,2)$-list colourbale with respect to the ordering $v_{i,1}, \ldots , v_{i,8}$ of its vertices. For ease of notation, let us relabel each $v_{i,j}$ in $H_i$ by $v_j$ for every $1 \leq j  \leq 8$. Let $\mathcal{L} = \{l(v_{j})~:~1 \leq j \leq 8\}$ be a $(3,3,3,3,3,3,3,2)$-assignment for $H_i$. We split the proof into two cases:  
\\ \textit{Case} $l(v_2) \cap l(v_4) \neq \emptyset$: Assign both $v_2$ and $v_4$ a colour $x$, where $x \in l(v_2) \cap l(v_4)$. We can then say that $H_i$ is $\mathcal{L}$-list colourable if its subgraph induced on vertices $v_1, v_3, v_5, v_6, v_7,v_8$ in that order is $(2,2,2,3,2,2)$-list colourable. From Proposition \ref{DiamondClaim}, we know that the graph induced on vertices $v_5, v_6, v_7,$ and $v_8$ can be properly list coloured using their updated lists. It's easy to colour $v_3$ followed by $v_1$ as their respective list sizes  are now one more than the number of colours in their neighbourhood. 
\\ \textit{Case} $l(v_5) \cap l(v_7) \neq \emptyset$: Assign both $v_5$ and $v_7$ a colour $x$, where $x \in l(v_5) \cap l(v_7)$. We can then say that $H_i$ is $\mathcal{L}$-list colourable if its subgraph $D'$ induced  on vertices $v_1, v_2, v_3, v_4, v_6, v_8$ in that order is $(3,2,3,2,2,1)$-list colourable. In $D'$, first colour $v_8$ followed by $v_6$ as their respective list sizes are now one more than the number of colours in their neighbourhood. Note that $D'$ is $(3,2,3,2,2,1)$-list colourable if the diamond graph induced on vertices $v_1, \ldots , v_4$ in that order is $(3,2,2,2)$-list colourable. 
From Proposition \ref{DiamondClaim}, we know that this diamond graph can be properly list coloured using its updated lists. 
\\ \textit{Case} $l(v_2) \cap l(v_4) = \emptyset$ and $l(v_5) \cap l(v_7) = \emptyset$: Here it's easy to see that either $|l(v_2) \cap l(v_5)| \leq 1$ or $|l(v_2) \cap l(v_7)| \leq 1$. Assume $|l(v_2) \cap l(v_5)| \leq 1$ (the proof is similar when $|l(v_2) \cap l(v_7)| \leq 1$). Let $l'(v_i) = l(v_i)$ if $i \neq 5$ and let $l'(v_5) = l(v_5) \setminus (l(v_2) \cap l(v_5))$. Let $\mathcal{L}' = \{l'(v_i)~:~1 \leq i \leq 8\}$. Clearly, if $H_i$ is $\mathcal{L}'$-list colourable, then it is $\mathcal{L}$-list colourable. By Proposition \ref{DiamondClaim}, we can say that the subgraph of $H_i$ induced on vertices $v_5,v_6,v_7,$ and $v_8$ is  $\mathcal{L}'$-list colourable. Once these vertices are coloured, each of $v_2$, $v_3$, and $v_4$ have at least two unused colours in their respective lists, that is,  at least two colours that have not been used by any of their respective neighbours so far. Also, $v_1$ has $3$ unused colours in its list. Thus, $H_i$ is $\mathcal{L}'$-list colourable if the subgraph  induced on vertices $v_1, v_2, v_3,$ and  $v_4$ in that order is $(3,2,2,2)$-list colourable. Thus it follows from Proposition \ref{DiamondClaim} that $H_i$ is $\mathcal{L}'$-list colourable.  
\end{proof}

Before we prove the main theorem of this section, let us recollect the famous characterization of $2$-choosable graphs due to Erd{\"o}s et al. in \cite{erdos1979choosability}. For a positive integer $k$, let $\theta(2,2,2k)$ denote the graph with two designated vertices $u$ and $v$ and three vertex disjoint paths between them, where each path is of length $2$, $2$, and $2k$ respectively. Given a connected graph $G$, let \emph{core($G$)} denote the graph obtained from $G$ by successive deletion of all vertices of degree $1$. Then, $G$ is $2$-choosable if and only if core($G$) is one of the three: an isolated vertex $K_1$, an even cycle $C_{2k}$, or a $\theta(2,2,2k)$, where $k$ is any positive integer.  
\begin{theorem}
Let $r$ be any positive integer and let $n = 8r$. Then, there exists a $3$-choosable graph $G$ on $n$ vertices such that the size of its largest induced $2$-choosable subgraph is $\frac{5n}{8}$.  
\end{theorem}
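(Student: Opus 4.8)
The plan is to take the graph $H$ from Figure~\ref{figureH} (consisting of $r$ copies of the $8$-vertex gadget $H_i$ joined in a path via the bottom vertices $v_{i,8}$) as the desired graph $G$, since $n = 8r$ matches and Lemma~\ref{3choosableLemma} already establishes that $H$ is $3$-choosable. Thus the first task is simply to record that $G := H$ gives a $3$-choosable graph on exactly $8r = n$ vertices, so only the lower and upper bounds on the largest induced $2$-choosable subgraph remain.

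For the \emph{lower bound} (there exists an induced $2$-choosable subgraph of size $\frac{5n}{8}$), I would exhibit an explicit choice of $5$ vertices per gadget whose union induces a $2$-choosable graph. The natural candidate is to delete from each $H_i$ the three vertices that create the two ``diamonds'' responsible for forcing $3$-choosability — plausibly the apex/internal vertices such as $v_{i,3}$, $v_{i,6}$, and one more — leaving $5r$ vertices. I would then verify $2$-choosability using the Erd\H{o}s--Rubin--Taylor characterization recalled just before the theorem: compute $\mathrm{core}$ of the induced subgraph by repeatedly deleting degree-$1$ vertices, and check that each connected component's core is an isolated vertex $K_1$, an even cycle $C_{2k}$, or a $\theta(2,2,2k)$. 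The deletions within each gadget should be chosen so that the surviving structure, together with the path-linking edges $v_{i,8}v_{i+1,8}$, reduces under core-taking to one of these allowed shapes (most likely an even cycle or a $\theta(2,2,2k)$ spanning the whole chain).

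For the \emph{upper bound} (no induced subgraph on more than $\frac{5n}{8}$ vertices is $2$-choosable), the argument must be a counting/pigeonhole statement: in any induced subgraph on more than $5r$ vertices, some gadget $H_i$ contributes at least $6$ of its $8$ vertices, and I would show that any induced subgraph of a single $H_i$ on $6$ or more vertices already fails the Erd\H{o}s--Rubin--Taylor characterization — for instance because it retains a diamond (which has core $K_4$ minus an edge, i.e.\ $\theta(1,2,2)$, none of the three allowed cores) or more generally a core containing a triangle or two vertex-disjoint even substructures forbidden by the classification. Carefully, since the $2$-choosability test applies to the \emph{whole} induced subgraph rather than gadget-by-gadget, I would argue that a forbidden core appearing inside one gadget survives in the global core, because the inter-gadget edges only attach at the bottom vertices and cannot ``rescue'' a forbidden configuration; this locality observation is what lets the per-gadget analysis control the global structure.

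The hard part will be the upper bound, specifically the claim that every $6$-vertex induced subgraph of $H_i$ is \emph{not} $2$-choosable and that this obstruction is preserved globally. One must enumerate (up to the symmetry of the gadget) which $6$ of the $8$ vertices survive and confirm in each case that a forbidden core persists; the subtlety is that core-taking is a global operation, so a vertex that would be a leaf within $H_i$ alone might have positive degree because of the linking edge $v_{i,8}v_{i\pm1,8}$, or vice versa, and the argument must handle these boundary effects uniformly. I expect the cleanest route is to first prove a self-contained structural claim that any diamond occurring as an induced subgraph forces failure of $2$-choosability (its core is $\theta(1,2,2)$, disallowed), and then show combinatorially that retaining $\ge 6$ vertices of $H_i$ always leaves an induced diamond intact, after which the global preservation follows because the diamonds sit entirely in the upper part of each gadget, away from the linking vertices.
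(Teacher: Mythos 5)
Your overall strategy coincides with the paper's: take $G=H$, invoke Lemma~\ref{3choosableLemma} for $3$-choosability, keep five vertices per gadget for the lower bound, and reduce the upper bound by pigeonhole (together with the fact that every induced subgraph of a $2$-choosable graph is $2$-choosable) to showing that no six vertices of a single $H_i$ induce a $2$-choosable graph. The genuine gap is in the route you commit to for that per-gadget claim. You propose to show that any six surviving vertices of $H_i$ retain an induced diamond (equivalently, since a diamond contains a triangle, an odd cycle). This fails in exactly the cases that matter. The gadget $H_i$ has precisely four triangles, forming two edge-sharing pairs on the disjoint vertex sets $\{v_{i,1},\dots,v_{i,4}\}$ and $\{v_{i,5},\dots,v_{i,8}\}$, so the two deleted vertices can hit all four triangles --- the possible deletions are $\{v_{i,1},v_{i,6}\}$, $\{v_{i,1},v_{i,8}\}$, $\{v_{i,3},v_{i,6}\}$, $\{v_{i,3},v_{i,8}\}$ --- and in each of these four cases the remaining six vertices induce a \emph{triangle-free} (hence diamond-free) graph, so your obstruction vanishes. (Your claim is also false outside these cases: deleting $v_{i,2}$ and $v_{i,5}$ leaves two vertex-disjoint triangles but no diamond.) The paper closes these four residual cases with a different obstruction: each of the four six-vertex graphs contains $G_8$ (Figure~\ref{figureG8}, two $4$-cycles pasted along an edge, i.e.\ $\theta(1,3,3)$) as a subgraph of minimum degree $2$; such a subgraph survives core-taking, and no graph among $K_1$, $C_{2k}$, $\theta(2,2,2k)$ contains it, so the Erd\H{o}s--Rubin--Taylor characterization rules out $2$-choosability. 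Without identifying this second obstruction your upper bound does not close.

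Two smaller points. First, your concern about whether a forbidden configuration inside one gadget ``survives in the global core'' is unnecessary: hereditariness of $2$-choosability lets you pass directly to the induced subgraph on the six vertices within $H_i$, which is what the paper does. (Your locality argument can be repaired, since a subgraph of minimum degree $2$ is never deleted by successive removal of degree-$1$ vertices, but it is extra work.) Second, for the lower bound you should commit to a set avoiding the linking vertices $v_{i,8}$ entirely; the paper keeps $\{v_{i,3},\dots,v_{i,7}\}$, whose connected components are $4$-cycles with one pendant vertex and hence have core $C_4$. Your alternative of retaining the $v_{i,8}$'s in the hope of a single even cycle or $\theta(2,2,2k)$ ``spanning the whole chain'' will not work: distinct cycles joined by bridges are untouched by core-taking, so the resulting core would have several independent cycles and could not be one of the three allowed shapes.
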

\begin{proof}
Consider the graph $H$ in Figure \ref{figureH}. It has $n$ vertices, where $n=8r$, for some positive integer $r$. By Lemma \ref{3choosableLemma}, $H$ is $3$-choosable. In order to prove the theorem, it is enough to show that the largest $2$-choosable graph in any $H_i$ has at most $5$ vertices, where $H_i$ is the subgraph of $H$ induced on the vertex set $\{v_{i,1}, \ldots ,  v_{i,8}\}$. Assume for contradiction that $H'$ is an induced subgraph of $H_i$ such that $|V(H')|=6$ and $\chi_L(H') = 2$. Clearly, $V(H_i) \setminus V(H')$ should be a hitting set for all the four triangles in $H_i$. That is, $V(H_i) \setminus V(H')$ is one of $\{v_{i,1}, v_{i,6}\},~\{v_{i,1}, v_{i,8}\},~\{v_{i,3}, v_{i,6}\},$ or $\{v_{i,3}, v_{i,8}\}$.  In all the above cases $H'$ is a super graph of $G_8$ (see Figure \ref{figureG8}), where $G_8$ is the graph obtained by pasting two $4$-cycles on an edge. Since core($G_8$) is none of $K_1$, $C_{2k}$, or $\theta(2,2,2k)$, $G_8$ is not $2$-choosable. Therefore, $H'$ is not $2$-choosable.  

Consider the subgraph of $H$ induced on the vertex set $\{v_{i,j}~:~1 \leq i \leq r,~3 \leq j \leq 7\}$. This subgraph is $2$-choosable as the core of each of its connected components is a four cycle. Thus, we get a $2$-choosable subgraph of $H$ of size $\frac{5n}{8}$. 
\end{proof}

We shall now show that though the graph $H$ in Figure \ref{figureH} disproves the stronger variant of the partial list colouring conjecture stated in the beginning of this section by the authors, it is still no counterexample to the partial list colouring conjecture of \cite{albertson2000partial}. Below we prove a lemma which will aid us in showing this. 

\begin{figure}[!ht]
\begin{center}
\begin{pspicture}(-0.5,-0.5)(2.5,1.5)
	\psline[showpoints=true](0,0)(1,0)(2,0)(2,1)(1,1)(0,1)(0,0)
	\psline[showpoints=true](1,0)(1,1)

	\uput[l](0,1){$v_1$}
	\uput[u](1,1){$v_2$}
	\uput[r](2,1){$v_3$}
	\uput[l](0,0){$v_4$}
	\uput[270](1,0){$v_5$}
	\uput[r](2,0){$v_6$}
	
\end{pspicture}
\end{center}
\caption{Graph $G_8$}
\label{figureG8}
\end{figure}
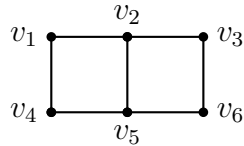

\begin{lemma}
\label{G8Lemma}
Let $\mathcal{L}=\{l(v_i)~:~1 \leq i \leq 8\}$ be a $2$-assignment for the graph $G_8$ of Figure \ref{figureG8} such that $l(v_1)=l(v_4)$ (or equivalently $l(v_3)=l(v_6)$). Then $G_8$ is $\mathcal{L}$-list colourable. 
\end{lemma}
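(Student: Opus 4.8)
The plan is to colour $G_8$ in two stages: first the $4$-cycle that avoids the twinned vertices, and then the twinned vertices themselves. Write $\{a,b\}=l(v_1)=l(v_4)$. By the automorphism of $G_8$ that swaps $v_1\leftrightarrow v_3$ and $v_4\leftrightarrow v_6$ while fixing $v_2,v_5$, handling this hypothesis also settles the equivalent one $l(v_3)=l(v_6)$, so I would treat only the case $l(v_1)=l(v_4)$. The key structural observation is that the subgraph of $G_8$ induced on $\{v_2,v_3,v_5,v_6\}$ is exactly the $4$-cycle $v_2\,v_3\,v_6\,v_5$, in which $v_2$ and $v_5$ are adjacent (they are the endpoints of the shared edge $v_2v_5$), while $v_1$ is adjacent only to $v_2$ apart from $v_4$, and $v_4$ is adjacent only to $v_5$ apart from $v_1$.

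First I would colour this $4$-cycle using the original lists $l(v_2),l(v_3),l(v_5),l(v_6)$, each of size $2$. Since an even cycle is $2$-choosable (this is immediate from the characterisation of $2$-choosable graphs quoted above, as $\mathrm{core}(C_4)=C_4$), such a proper list colouring exists; let $c_2$ and $c_5$ be the colours it assigns to $v_2$ and $v_5$. Because $v_2v_5$ is an edge of the cycle, we have $c_2\neq c_5$, and this inequality is the engine of the whole argument.

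It then remains to extend the colouring to $v_1$ and $v_4$. The only constraints left are $v_1\neq v_4$ (from the edge $v_1v_4$), $v_1\neq c_2$, and $v_4\neq c_5$, with both $v_1,v_4$ coloured from $\{a,b\}$. I would assign $\{a,b\}$ to $\{v_1,v_4\}$ as a bijection, which automatically gives $v_1\neq v_4$, and choose between the two bijections so as to avoid $c_2$ on $v_1$ and $c_5$ on $v_4$.

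I expect this final extension step to be the crux, since it is exactly where the hypothesis $l(v_1)=l(v_4)$ (forcing $\{v_1,v_4\}$ to occupy both of $a,b$) combines with $c_2\neq c_5$ to guarantee room for $v_1$ and $v_4$. The verification is a short case check: if $v_1=a,\,v_4=b$ fails, then $a=c_2$ or $b=c_5$. In the first case $c_2=a\neq b$ gives $b\neq c_2$, and $c_5\neq c_2=a$ gives $a\neq c_5$, so $v_1=b,\,v_4=a$ is admissible; the second case ($b=c_5$) is symmetric, again making $v_1=b,\,v_4=a$ admissible because $c_5\neq c_2$ forces $c_2\neq b$. Hence one of the two bijections always works, which completes the colouring and proves that $G_8$ is $\mathcal{L}$-list colourable.
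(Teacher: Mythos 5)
Your proof is correct and follows essentially the same route as the paper's: first list-colour the $4$-cycle on $v_2,v_3,v_6,v_5$ using $2$-choosability of even cycles, then exploit $c_2\neq c_5$ together with $l(v_1)=l(v_4)$ to extend to $v_1$ and $v_4$. The only cosmetic difference is that you organise the final step as a choice between the two bijections $\{v_1,v_4\}\to\{a,b\}$, whereas the paper splits on whether $c_2\in l(v_1)$ and $c_5\in l(v_4)$; both case checks are valid.
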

\begin{proof}
We first list colour the $4$-cycle induced on vertices $v_2, v_3,v_5$, and $v_6$ as we know that every $4$-cycle is $2$-choosable. Let $c_2$ and $c_5$ be the colours assigned to $v_2$ and $v_5$, respectively, in this colouring. If $c_2 \notin l(v_1)$ (or $c_5 \notin l(v_4)$), then we can easily list colour both $v_1$ and $v_4$. Suppose, $c_2 \in l(v_1)$ and $c_5 \in l(v_4)$. Since $l(v_1) = l(v_4)$, we have $l(v_1) = l(v_4) = \{c_2, c_5\}$. Assigning colours $c_5$ and $c_2$ to $v_1$ and $v_4$, respectively, gives a valid list colouring for $G_8$.    
\end{proof}

\begin{theorem}
The partial list colouring conjecture holds true for the graph $H$ in Figure \ref{figureH}.
\end{theorem}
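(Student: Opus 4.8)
The plan is to record that $s=\chi_L(H)=3$: by Lemma~\ref{3choosableLemma} the graph $H$ is $3$-choosable, and since it contains a triangle $\chi_L(H)=3$. Hence I only have to verify $\lambda_t(H)\ge tn/3$ for $t\in\{1,2\}$. The case $t=1$ is immediate, because a $1$-assignment is worst when it is constant, so $\lambda_1(H)=\alpha(H)$, and $\alpha(H)\ge n/\chi(H)=n/3$ (here $\chi(H)=3$, as $3\le\chi(H)\le\chi_L(H)=3$). Everything therefore reduces to proving $\lambda_2(H)\ge 2n/3=16r/3$ for an arbitrary $2$-assignment $\mathcal L$.

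First I would exploit that $H$ consists of $r$ copies of the gadget $H_i$ on $\{v_{i,1},\dots,v_{i,8}\}$, pairwise joined only by the path edges $v_{i,8}v_{i+1,8}$ between the degree-$3$ bottom vertices. Because $16r/3\le 6r$, it suffices to keep at least $6$ of the $8$ vertices of every block. The decisive simplification is to seek, in each block, a colourable $6$-subset that omits $v_{i,8}$: since every inter-block edge is incident to some $v_{i,8}$, such subsets span no edge between distinct blocks, so the per-block colourings are independent and paste together into a global $\mathcal L$-colouring with no coordination.

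For a single block (write $v_j$ for $v_{i,j}$) the obstructions are the top diamond $v_1v_2v_3v_4$ and the bottom diamond $v_5v_6v_7v_8$, each a $K_4-e$ with list chromatic number $3$; any surviving $6$-set must break both, which forces me to delete $v_8$ (breaking the bottom diamond without touching other blocks) together with one top vertex. I would focus on two such subsets. Deleting $v_8,v_3$ leaves two $4$-cycles sharing the edge $v_2v_7$, i.e. a copy of $G_8$, so by Lemma~\ref{G8Lemma} it is $\mathcal L$-colourable whenever two opposite corners of one $4$-cycle ($v_1,v_4$ or $v_5,v_6$) carry equal lists. Deleting $v_8,v_1$ instead leaves a bipartite graph (isomorphic to $K_{3,3}$ minus an edge), which is colourable for the complementary, ``spread-out'' list patterns; Proposition~\ref{DiamondClaim} is the local tool I would invoke whenever colouring a freed-up neighbour promotes a diamond vertex to an effective $3$-list.

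The hard part will be proving that this dichotomy is exhaustive, i.e. that for \emph{every} $2$-assignment at least one of these two $v_8$-avoiding $6$-subsets is colourable. One cannot shortcut this via choosability, since the preceding theorem shows that no $6$-vertex subgraph of $H_i$ is $2$-choosable; the argument must genuinely use the given lists, and Lemma~\ref{G8Lemma} is exactly what defeats the tight $G_8$-configurations. I would organise the cases by how the two diamonds communicate through the connecting edges $v_2v_5,\,v_3v_6,\,v_4v_7,\,v_2v_7$: when their colour demands are essentially disjoint the diamonds decouple and are repaired separately, and when they overlap the equal-list hypothesis of Lemma~\ref{G8Lemma} can be arranged. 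Should a stubborn assignment defeat every $v_8$-avoiding choice, the fallback is to retain a $6$-subset containing $v_8$ and colour the path $v_{1,8}\cdots v_{r,8}$ separately, which is a tree and hence $2$-choosable and so imposes no real constraint; but establishing the claim without this detour is the crux.
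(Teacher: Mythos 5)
Your reduction to $\lambda_2(H)\ge 2n/3=16r/3$ is right, but the proof has a genuine gap exactly where you flag it: the claim that for \emph{every} $2$-assignment at least one of the two $v_8$-avoiding $6$-subsets of a block is colourable is never established, and it is not a routine case check. The subset obtained by deleting $\{v_3,v_8\}$ is a copy of $G_8$, which is not $2$-choosable, and Lemma~\ref{G8Lemma} only rescues it when one of the pairs $\{v_1,v_4\}$ or $\{v_5,v_6\}$ happens to carry equal lists; an adversary can avoid that (e.g.\ $l(v_2)=l(v_7)=\{1,2\}$, $l(v_1)=\{1,3\}$, $l(v_4)=\{2,3\}$, $l(v_5)=\{2,3\}$, $l(v_6)=\{1,3\}$ defeats that $G_8$ outright), and it is then far from clear that the $K_{3,3}$-minus-an-edge subset obtained by deleting $\{v_1,v_8\}$ — also not $2$-choosable — survives for the same lists once $l(v_3)$ is chosen adversarially. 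Your fallback is also not sound as stated: colouring the path $v_{1,8}\cdots v_{r,8}$ ``separately'' precolours each $v_{i,8}$, and a block must then be completed \emph{subject to} that precolouring, which is a strictly harder condition than colouring a $6$-subset; ``the path is a tree'' does not make the constraint disappear.

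The paper sidesteps all of this by noticing that $16r/3\approx 5.33r$ does not require $6$ vertices from every block. It takes only the $5$ vertices $v_{j,3},\dots,v_{j,7}$ (whose core is a $4$-cycle, hence $2$-choosable) from every other block, and $6$ vertices from the remaining blocks, averaging $5.5$ per block. Because the $5$-vertex blocks avoid $v_{j,8}$, the $6$-vertex blocks are free to \emph{include} $v_{i,8}$, and then the per-block argument becomes easy: if the lists on the triangle $v_{i,1}v_{i,3}v_{i,4}$ are not all equal, colour that triangle and then $v_{i,7},v_{i,8},v_{i,5}$ greedily; if they are all equal, then $l(v_{i,1})=l(v_{i,4})$ and the copy of $G_8$ on $\{v_{i,1},v_{i,4},v_{i,7},v_{i,8},v_{i,5},v_{i,2}\}$ is handled by Lemma~\ref{G8Lemma}. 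The missing idea in your proposal is precisely this relaxation from ``$6$ per block, decoupled'' to ``alternate $6$ and $5$'', which converts the unproven crux into a two-line case split.
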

\begin{proof}
Let $n$ denote the number of vertices of $H$, where $n=8r$. From Lemma \ref{3choosableLemma}, we know that $H$ is $3$-choosable. Hence to prove the theorem it's enough to show that $\lambda_2(H) \geq \frac{2n}{3}$. Let $H_i$ denote the subgraph of $H$ induced on the vertex set $\{v_{i,j}~:~1 \leq j \leq 8\}$. We prove the theorem by showing that given any $2$-assignment for $H$, for every odd integer $i$ ranging from $1$ to $r$, and for every even integer $j$ ranging from $1$ to $r$, we can properly list colour $6$ vertices of every $H_i$ and  $5$ vertices of every $H_{j}$. Since the subgraph induced on vertices $v_{j,3}, v_{j,4}, v_{j,5}, v_{j,6}$, and $v_{j,7}$ is $2$-choosable (see characterization of $2$-choosable graphs in \cite{erdos1979choosability}), every $H_j$ contains $5$ vertices that can be properly list coloured using the given $2$-assignment for $H$. Moreover, note that none of these coloured vertices have a neighbour in any $H_{k}$, where $k \neq j$ and $1 \leq k \leq r$. Now, consider any $H_i$, where $i$ is even. Suppose the lists assigned to vertices $v_{i,1}, v_{i,3}$, and $v_{i,4}$ are not all the same then we can colour the triangle induced on them. Moreover, we can also colour the vertices $v_{i,7}, v_{i,8}$, and $v_{i,5}$ in that order as each of them see at most $1$ colour in its neighbourhood when it is coloured. Hence, in this case we can colour $6$ vertices from $H_i$. Consider the case when  vertices $v_{i,1}, v_{i,3}$, and $v_{i,4}$ are all assigned the same list. In this case, consider the subgraph $H_i'$ induced on vertices $v_{i,1}, v_{i,4}, v_{i,7}, v_{i,8}, v_{i,5}$, and $v_{i,2}$. Note that $H_i'$ is isomorphic to $G_8$. Moreover, both $v_{i,1}$ and $v_{i,4}$ have been assigned the same list. Then, by Lemma \ref{G8Lemma}, we know that $H_i'$ is $2$-choosable. Hence, we prove the theorem.

\end{proof}
\bibliographystyle{plain}

\begin{thebibliography}{1}

\bibitem{albertson2000partial}
Michael~O Albertson, Sara Grossman, and Ruth Haas.
\newblock Partial list colorings.
\newblock {\em Discrete Mathematics}, 214(1):235--240, 2000.

\bibitem{chappell1999lower}
Glenn~G Chappell.
\newblock A lower bound for partial list colorings.
\newblock {\em Journal of Graph Theory}, 32(4):390--393, 1999.

\bibitem{erdos1979choosability}
Paul Erd{\"o}s, Arthur~L Rubin, and Herbert Taylor.
\newblock Choosability in graphs.
\newblock {\em Congr. Numer}, 26:125--157, 1979.

\bibitem{haas2003bounds}
Ruth Haas, Denis Hanson, and Gary MacGillivray.
\newblock Bounds for partial list colourings.
\newblock {\em Ars Combinatoria}, 67:27--32, 2003.

\bibitem{janssen2001partial}
Jeannette~CM Janssen.
\newblock A partial solution of a partial list colouring problem.
\newblock {\em Congressus Numerantium}, pages 75--80, 2001.

\bibitem{NoelReedWu}
Jonathan~A. Noel, Bruce~A. Reed, and Hehui Wu.
\newblock A proof of a conjecture of {O}hba.
\newblock {\em CoRR}, abs/1211.1999, 2012.

\bibitem{OhbaConjecture}
Kyoji Ohba.
\newblock On chromatic-choosable graphs.
\newblock {\em Journal of Graph Theory}, 40(2):130--135, 2002.

\bibitem{plummer1968minimal}
Michael~D Plummer.
\newblock On minimal blocks.
\newblock {\em Transactions of the American Mathematical Society},
  134(1):85--94, 1968.

\end{thebibliography}

\end{document}